\pgfplotsset{compat=1.15}
\DeclarePairedDelimiterX\set[1]\lbrace\rbrace{#1}
\newtheorem*{theorem*}{Theorem}
\newtheorem{theorem}{Theorem}[section]
\newtheorem{lemma}[theorem]{Lemma}
\newtheorem{proposition}[theorem]{Proposition}
\theoremstyle{definition}
\newtheorem{definition}[theorem]{Definition}
\newtheorem*{problem*}{Problem}
\newtheorem{remark}{Remark}[theorem]
\def \a  {\alpha}
\def \g  {\gamma}
\def \d  {\delta}
\def \D  {\Delta}
\def \e  {\varepsilon}
\def \l  {\lambda}
\def \Om {\Omega}
\def \r  {\rho}
\def \x {{\bm{x}}}
\def \del {\nabla}
\def \p  {\partial}
\def \R  {\mathds{R}}
\def \N  {\mathbb{N}}
\def \Maxi {\mathrm{M}}
\def \Mini {\mathrm{m}}
\numberwithin{equation}{section}
\newcounter{AssBio}
\newcounter{AssEx}
\title{Robust and fast iterative method for the elliptic Monge-Ampère equation}
\author[1]{R.N. Köhle\thanks{email: \href{mailto:r.n.kohle@tue.nl}{r.n.kohle@tue.nl}}}
\author[2]{K.T.W. Menting}
\author[1]{K. Mitra}
\author[1]{J.H.M. ten Thije Boonkkamp}
\affil[1]{Department of Mathematics and Computer Science, Eindhoven University of Technology, P.O. Box 513, 5600 MB Eindhoven, The Netherlands}
\affil[2]{Infiniot, High Tech Campus 10, 5656 AE Eindhoven, The Netherlands}
\date{\today}
\begin{document}

\maketitle

\begin{abstract}
    This paper introduces a fast and robust iterative scheme for the elliptic Monge-Ampère equation with Dirichlet boundary conditions. The Monge-Ampère equation is a nonlinear and degenerate equation, with applications in optimal transport, geometric optics, and differential geometry. The proposed method linearises the equation and uses a fixed-point iteration (L-scheme), solving a Poisson problem in each step with a weighted residual as the right-hand side. This algorithm is robust against discretisation, nonlinearities, and degeneracies. For a weight greater than the largest eigenvalue of the Hessian, contraction in $H^2$ and $L^\infty$ is proven for both classical and generalised solutions, respectively. The method's performance can be enhanced by using preconditioners or Green’s functions. Test cases demonstrate that the scheme outperforms Newton's method in speed and stability.

\end{abstract}

\textbf{Keywords:} Monge-Ampère equation, L-scheme, Linearisation 

\textbf{MSC codes:}
35J96, 65J15, 47J25, 65F08

\section{Introduction} \label{sec: 1}
The Monge-Ampère equation is a nonlinear, second-order partial differential equation which plays a significant role in various areas of mathematics and physics including differential geometry, optimal transport, and geometric optics \cite{de2014monge,glimm2003optical,ten2025inverse,trudinger2008monge} .
Let $\Om \subset \R^d$ be open and bounded for $d \in \N$. The $d$-dimensional Monge-Ampère equation on $\Om$ is a boundary value problem of the form
\begin{align}\label{eq:main_problem}
  \left\{
  \begin{aligned}
    \det(\mathrm{D}^2 u) &= f(\cdot,\nabla u)   &&\quad \text{in } \Omega, \\
    u &= \gamma                                 &&\quad \text{on } \partial\Omega,
  \end{aligned}
  \right.
\end{align}
where $u \in C^2(\bar{\Om})$ and $\mathrm{D}^2u$ is the Hessian matrix of $u$.

The Monge-Ampère equation with Dirichlet boundary condition appears in the context of a hypersurface described by $u:\Om\to \R$ having fixed boundary values $\gamma$ on $\p\Om$, and prescribed Gaussian curvature $K(\bm{x})$ for all $\bm{x}\in \Om$. This leads to the equation
\begin{equation}\label{eq:guass_curvature}
    \frac{\det (\mathrm{D}^2 u)}{(1+\vert \nabla u \vert ^2)^{(d+2)/2}}=K,
\end{equation}
 corresponding to \eqref{eq:main_problem} with $f(\bm{x}) = K(\bm{x})(1+\vert \nabla u(\bm{x}) \vert ^2)^{(d+2)/2}$ \cite{de2014monge}.
The Monge-Ampère equation is also an essential tool in the field of optimal transport, describing the problem of moving one mass distribution (in $\Om$) to another (say in $\vartheta$) while minimising a given (quadratic) cost function \cite{santambrogio2015optimal}.
This cost function typically represents the Euclidean distance each mass element needs to be moved. 
Several problems in inverse optical design can be framed as optimal transport problems. 
In these cases, the `mass' corresponds to energy density, and the cost function corresponds to the optical path length of light rays \cite{Prins,yadav2019least}.
However, in the optimal transport formulation of optics, we need the transport boundary condition $\nabla u(\p \Om)=\p \vartheta$ which introduces further complications. For simplicity, in this paper, we focus on Dirichlet boundary conditions.

The Monge-Ampère equation is nonlinear and may change its type from elliptic to parabolic to hyperbolic depending on the sign of $f$ \cite{de2014monge,bertens2021numerical}, thus, requiring different approaches to solve. This is called `degeneracy' and correspondingly the surface described by $u$ is convex/concave, flat (0 Gaussian curvature), or saddle shaped. Nonlinearity and degeneracy of the problem make it quite challenging to solve system \eqref{eq:main_problem} numerically. 
Despite this, several distinct approaches have been developed. 
Some studies focus on Dirichlet boundary conditions, while others employ transport boundary conditions for the elliptic Monge-Ampère equation.
For solving the Monge-Ampère equation with Dirichlet boundary conditions using finite elements, a time-marching scheme for the resulting nonlinear system of equations is outlined in \cite{Awanou}, where a pseudo-time parameter is introduced to update the solution iteratively. This scheme is applicable to Alexandrov solutions with both finite difference or finite element discretisations \cite{awanou2017standard, awanou2016standard}. Another finite element discretisation is proposed in \cite{glowinski2019finite}, where the Monge-Ampère equation is solved in a two-stage method. The authors also use a pseudo-time parameter in a Newton-like scheme. A vanishing moments approach is described in \cite{brenner2012finite,vanishing_moments}, which introduces a regularization that approximates the second-order Monge-Ampère equation using a fourth-order quasilinear PDE.
An alternative approach, presented in \cite{Froese}, uses a wide-stencil scheme to solve the same problem, employing a damped Newton method to obtain the solution. A finite element method for a regularised formulation of the Monge-Ampère equation in two dimensions has been considered in \cite{gallistl2023convergence}, and an adaptive discretisation algorithm presented in \cite{gallistl2024stability}.
Additionally, the Dirichlet problem can be addressed using neural networks, as demonstrated in \cite{NYSTROM}. 

With transport boundary conditions, one approach to discretise the Monge-Am\-père equation is the use of finite element methods, see \cite{kawecki2018} for a non-variational version with oblique boundary conditions.  To linearise the problem, a Newton-Raphson iteration is employed, which transforms the problem into a sequence of elliptic equations.
A projection method to solve these equations is described in \cite{froese_Transp}, where in each iteration the current guess is projected onto the space of convex functions. The resulting system is then solved using Newton iteration.
In \cite{Prins} an iterative least-squares solver is proposed for optical applications, and generalised for non-quadratic cost functions in \cite{yadav2019least, romijn2020monge, ten2025inverse}.
Alternatively, a novel artificial neural network-based approach to solve this problem is presented in \cite{hacking2024}. 

For the hyperbolic Monge-Ampère equation ($f<0$), the methods of characteristics is used in \cite{bertens2021numerical} to transform the partial differential equation into two coupled systems of ordinary differential equations. These can be solved with explicit one-step methods. Alternatively, a least-squares solver for the hyperbolic Monge-Ampère equation with transport boundaries is described in \cite{bertens2023iterative}.

Existing iterative algorithms, as demonstrated through numerical examples, face significant challenges in terms of convergence. Newton's method and other iterative solvers are inherently reliant on an appropriate initial guess and mesh size. Moreover, the convergence of these iterations is often constrained by the nonlinearity and degeneracy of the problem, which can result in divergence. 
To overcome these issues,  Awanou in 2015 \cite{Awanou} proposed an iterative scheme (the so-called 'pseudo time-marching') which guarantees linear convergence of iterates in $H^1$ for classical solutions. Independently, inspired by robust linearisation schemes such as the  $L$-scheme \cite{POP2004365,MITRA20191722,stokke2023adaptive,javed2025robust}, and motivated to solve a fixed problem in each iterative step, we arrived at the same scheme: for a current iterate $u^{i}\in C^2(\bar{\Om})$, it computes the next  iterate $u^{i+1}$ from a Poisson problem, which will be described later. We summarise the main achievements of this paper below.

\textbf{Main result:} 
The iterations solve a Poisson equation at each step with a weighted residual in the right hand side. Thus, for an arbitrary initial guess $u^0\in C^2(\bar{\Om})$, the iterations are always well-posed, as opposed to the Newton scheme which would require convexity of each $u^i$ (see \Cref{sec:ProbForm}). Next, under convexity conditions on the iterates, we prove that the scheme converges linearly in $H^2$ for classical solutions (stronger convergence than in \cite{Awanou}), and in $L^\infty$ for viscosity solutions (weaker notion of solutions) even when $f$ depends on $\nabla u$, irrespective of the spatial discretization (differentiating it from \cite{Awanou}). Going beyond, since in each iteration we essentially solve a Poisson problem, numerous acceleration techniques designed for the Poisson equation become available. We explore two such options, Green's function and preconditioners, which can be computed once, and used in every iteration to accelerate the solution process. We show that among the standard preconditioners, preconditioned algebraic multigrid gives the best performance. Numerical results demonstrate that the scheme is extremely robust, converging for all mesh sizes, with vastly different (even saddle shaped) initial guesses, and rapidly oscillating $f$ which can even approach the degeneracy limit ($f=0$). Newton's convergence is shown to be limited in terms of all these aspects. Furthermore, due to the acceleration possible, we show that in terms of CPU time, the method outperforms Newton in all cases when both methods converge, despite Newton requiring less iterations for coarser meshes. This establishes our method as a fast and robust way to solve the Monge-Ampère equation.

\section{Mathematical preliminaries}\label{sec:ProbForm}
In the following, we will study the $d$-dimen\-sional elliptic Monge-Ampère equation on a domain $\Om$ with Dirichlet boundary condition. 
We assume that $\Om \subset \R^d$ is open and bounded with a Lipschitz boundary $\partial \Om$. 
Higher regularity of the boundary will be assumed when considering classical solutions.

\textbf{Functional spaces:}
In this work, $C(\Om)$ denotes the set of continuous functions on $\Om$, $C^k(\Om)$  the set of functions that have continuous derivatives up to the $k^{\rm th}$ order ($k\in \N$), and $C^{k,\a}(\Om)$ the set of functions having up to $k^{\rm th}$ order derivatives that are H\"older continuous with exponent $\alpha$ (where $\a\in (0,1)$) (see \cite[Chapter 5]{evans2022partial}). Continuous and Lipschitz continuous functions in $\Om$ will be associated with $C^{0,0}(\Om)$ and $C^{0,1}(\Om)$; respectively. Furthermore, 
$L^2(\Om)$  denotes the set of measurable functions that are square integrable, $L^\infty(\Om)$ consists of all measurable functions $u$ that are essentially bounded, and the Sobolev space $H^k(\Om)$ is the subset of $L^2(\Om)$ with weak derivatives up to the $k^{\rm th}$ order in $L^2(\Om)$. The norm of a space $\mathcal{V}$ will be denoted by $\|\cdot\|_{\mathcal{V}}$.

\textbf{Matrix relations:}  The $d\times d$ identity matrix is $\mathbb{I}_d$. Subsequently, $\bm{A},\bm{B} \in \R^{d \times d}$ will denote symmetric matrices. This implies $\bm{A}$ has real eigenvalues, say $\l_{A,1}\leq \dots\leq \l_{A,j}\leq \dots \leq \l_{A,d}$, and for some orthogonal matrix $\bm{Q}_A$, it holds:
\begin{equation}
    \bm{A}= \bm{Q}_A^{\rm T} {\rm diag}(\l_{A,j}) \bm{Q}_A.
\end{equation}
We have the matrix ordering $\bm{A} \prec \bm{B}$, if $\bm{B} - \bm{A}$ is positive definite, and  $\bm{A} \preceq \bm{B}$ if $\bm{B} - \bm{A}$ is positive semi-definite,  $\bm{A} \succeq \bm{B}$, if $\bm{A} -\bm{B}$ is negative semidefinite and $\bm{A} \succ \bm{B}$, if $\bm{B} - \bm{A}$ is negative definite. The Frobenius inner-product of $\bm{A}=(a_{jk})_{1\leq j,k\leq d}$ and $\bm{B}=(b_{jk})_{1\leq j,k\leq d}$ is defined as
    \begin{equation}\label{eq:frob_prod}
        \bm{A : B} := \text{tr}(\bm{A}^\mathrm{T} \bm{B}) = \text{tr}(\bm{B}^\mathrm{T} \bm{A})  = \sum_{j,k = 1}^d a_{kj} b_{kj}.
    \end{equation}  
We will use \textit{Jacobi's formula}: Let $\bm{A}=\bm{A}(t) \in \R^{d \times d}$ for $t \in \R$. We assume that $\bm{A}(t)$ is invertible for all $t$. Then, we have
\begin{equation}\label{eq:jacobi}
    \frac{\mathrm{d}}{\mathrm{d}t} \det(\bm{A}(t)) = \text{cof}(\bm{A}(t))\bm{:} \bm{A}'(t),
\end{equation}
where $\text{cof}(\bm{A})= \det(\bm{A})(\bm{A}^{-1} )^{\rm T}$ is the co-factor matrix. An immediate consequence of the mean value theorem is 
\begin{equation}\label{eq:DetMeanValue}
    \det (\bm{A})-\det (\bm{B})= \text{cof}(t\bm{A} + (1-t)\bm{B})\bm{:} (\bm{A}-\bm{B}),
\end{equation}
for some $t\in [0,1]$.

\subsection{Assumptions on data}
We assume the following properties of the data:
\begin{enumerate}[label=(A\arabic*)]
\item \label{ass:Afa}
The right-hand side $f(\bm{x,y})$ of \eqref{eq:main_problem}  is $C^{0,\a}(\Om)$ with respect to $\bm{x} \in \Om$ for any fixed $\bm{y}\in \R^{d}$ for some $0\leq \a\leq 1$. It is Lipschitz continuous with respect to $\bm{y}$ for a fixed $\bm{x} \in \Om$, i.e., there exists a Lipschitz constant $\mu_f > 0$ such that for all $\bm{x} \in \Om$ and $\bm{y}_1, \bm{y}_2 \in \R^d$,
\begin{equation}\label{eq:lipschitz_of_f}
    \vert f(\bm{x},\bm{y}_1) -f(\bm{x},\bm{y}_2) \vert \leq \mu_f \vert \bm{y}_1 - \bm{y}_2 \vert.
\end{equation}
Moreover, $f(\bm{x,y})$ is positive and bounded: there exist constants $f_{\Mini}$ and $f_{\Maxi}$ such that for all $\bm{x}\in\Om$ and for all $\bm{y}\in\R^d$,
\begin{equation}\label{eq:boundedness}
    0 \leq  f_{\Mini} \leq f(\bm{x,y}) \leq f_{\Maxi}.
\end{equation}
\item \label{ass:Aga}
The Dirichlet boundary value function $\gamma:\p\Om\to \R$ is the trace of a convex (or concave if $d$ is even) $C^{2,\a}(\bar{\Om})$ function which will also be denoted by $\gamma$.
\end{enumerate}

\begin{remark}[Smallness assumption on $\mu_f$]
    In our analysis we require $\mu_f$ in \eqref{eq:lipschitz_of_f} to be small, see \Cref{thm:convergence}. The case $\mu_f=0$ models the problem of a uniform target distribution in optimal transport, and is commonly discussed in the literature \cite{gutierrez2001monge}.  
\end{remark}

\begin{remark}[Choice of  $\a$]
When discussing classical solutions of \eqref{eq:main_problem} (in $C^2(\Om)$), we would consider $\a>0$. But for the generalised solutions (see \Cref{def:gen_sol}), we can take $\a=0$, and even $f(\bm{\cdot},\bm{y})$ discontinuous in $\Om$.  
\end{remark}

\subsection{Solution concepts and well-posedness}

 \begin{proposition}[Existence, uniqueness, and regularity of classical solutions] Let $\p\Om$ be in $C^{3,1}$, $\gamma\in C^{3,1}(\p\Om)$ and assume \ref{ass:Afa}--\ref{ass:Aga}. 
    Then there exists a unique convex $u\in C^{1,1}(\bar{\Om})$ that solves \eqref{eq:main_problem} almost everywhere. Moreover, if $f_\Mini>0$, then there exists  $\l_\Mini,\,\l_\Maxi:\Om\to (0,\infty)$ such that for almost every $\bm{x}\in \Om$
    \begin{equation}\label{eq:lmlM}
       \bm{0}\prec \l_{\Mini}(\bm{x}) \mathbb{I}_d \preceq \mathrm{D}^2 u(\bm{x})  \preceq \l_{\Maxi}(\bm{x}) \mathbb{I}_d.
       \end{equation}\label{prop:exist_class}
 \end{proposition}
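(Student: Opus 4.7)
The plan is to invoke classical Monge-Ampère existence and regularity theory for the Dirichlet problem, and then derive the pointwise Hessian bounds directly from the equation.

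First, existence and uniqueness of a convex Alexandrov (generalised) solution $u$ of \eqref{eq:main_problem} follow from the classical Dirichlet theory developed in Gutiérrez's monograph \cite{gutierrez2001monge}: the positivity and boundedness of $f$ from \ref{ass:Afa}, together with the convex $C^{2,\a}$ boundary datum from \ref{ass:Aga}, are precisely the hypotheses of the standard existence theorem, and uniqueness in the class of convex functions follows from the Alexandrov comparison principle. I would then upgrade the regularity to $C^{1,1}(\bar{\Om})$ by invoking the global Caffarelli--Nirenberg--Spruck/Trudinger--Wang boundary regularity theory for the Monge-Ampère equation: under the $C^{3,1}$ regularity of $\p\Om$ and $\gamma$ and continuity of $f$, the convex solution belongs to $C^{1,1}(\bar{\Om})$. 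Alexandrov's theorem then guarantees that $\mathrm{D}^2 u$ exists almost everywhere, so $u$ solves \eqref{eq:main_problem} pointwise a.e.

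With $u\in C^{1,1}(\bar{\Om})$, the upper bound in \eqref{eq:lmlM} is immediate: $\mathrm{D}^2 u \in L^{\infty}(\Om)$ and I would simply take $\l_{\Maxi}(\bm{x})$ as the largest eigenvalue of $\mathrm{D}^2 u(\bm{x})$. For the lower bound, convexity of $u$ forces the eigenvalues $0 \leq \mu_1(\bm{x}) \leq \dots \leq \mu_d(\bm{x})$ of $\mathrm{D}^2 u(\bm{x})$ to be non-negative, while the equation gives $\prod_{j=1}^{d}\mu_j(\bm{x}) = f(\bm{x},\nabla u(\bm{x})) \geq f_{\Mini} > 0$. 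Combined with $\mu_j(\bm{x})\leq \l_{\Maxi}(\bm{x})$, this forces
\[
\mu_1(\bm{x}) \;\geq\; \frac{f_{\Mini}}{\l_{\Maxi}(\bm{x})^{d-1}} \;=:\; \l_{\Mini}(\bm{x}) \;>\; 0,
\]
yielding the desired two-sided Hessian bound.

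The main obstacle I expect is not the final eigenvalue computation, which is elementary once the regularity is in place, but rather the precise citation of the boundary $C^{1,1}$ regularity theorem matching the hypotheses in \ref{ass:Afa}--\ref{ass:Aga}. Interior $C^{1,1}$ of a strictly convex Alexandrov solution is standard once $f$ is bounded away from zero, but up-to-the-boundary regularity requires a geometric convexity condition on $\Om$, which must be extracted from the existence of a convex $C^{2,\a}(\bar{\Om})$ extension of $\gamma$ in \ref{ass:Aga} (for instance by constructing convex sub- and supersolutions from $\gamma$ to control the boundary behaviour of the gradient map).
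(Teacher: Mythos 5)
The paper's own ``proof'' is a single citation to Guan \cite{guan1998dirichlet}, so comparing routes is really a matter of checking whether your alternative chain of references actually covers the hypotheses. There is a genuine gap, and you essentially flag it yourself at the end: both ingredients you reach for -- the Alexandrov existence/uniqueness theory in \cite{gutierrez2001monge} and the Caffarelli--Nirenberg--Spruck / Trudinger--Wang global $C^{1,1}$ estimates -- are stated for \emph{convex} domains $\Om$, whereas \ref{ass:Aga} deliberately does \emph{not} assume $\Om$ convex. It only assumes $\gamma$ extends to a globally defined convex $C^{2,\a}$ function, which is the ``existence of a strictly convex subsolution'' hypothesis. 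That is precisely why the paper points to Guan's 1998 paper, whose title is about Monge--Ampère in \emph{non-convex} domains and whose entire machinery replaces convexity of $\Om$ by the existence of such a subsolution. Your suggestion of ``constructing convex sub- and supersolutions from $\gamma$'' is indeed the right idea, but as written it is a placeholder rather than an argument; without it the Gutiérrez/CNS chain simply does not apply. A second, smaller gap: \ref{ass:Afa} allows $f = f(\bm{x},\nabla u)$, and the Gutiérrez comparison principle and Alexandrov existence theory you invoke are for $f=f(\bm{x})$ only; the gradient-dependent case again falls under Guan's theorem (prescribed Gauss curvature is exactly of this form) rather than the standard references.

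The final eigenvalue computation is correct and worth keeping: with $u\in C^{1,1}(\bar\Om)$ convex, almost everywhere defined $\mathrm{D}^2 u$ with eigenvalues $0\le\mu_1\le\dots\le\mu_d$, and $\prod_j \mu_j = f\ge f_\Mini>0$, setting $\l_\Maxi(\bm{x}):=\mu_d(\bm{x})$ and $\l_\Mini(\bm{x}):=f_\Mini/\l_\Maxi(\bm{x})^{d-1}$ indeed gives \eqref{eq:lmlM}. This makes explicit something the paper leaves implicit in the citation. But to repair the proof you should either cite \cite{guan1998dirichlet} directly for the $C^{1,1}(\bar\Om)$ existence (as the paper does), or actually carry out the subsolution-based barrier construction that replaces convexity of $\Om$ before invoking interior regularity.
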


 The proof of this statement can be found in \cite{guan1998dirichlet}.
A solution $u\in C^2(\Om)$ (or $\in C^{1,1}(\Om)$ as above) is called a \emph{classical solution} of \eqref{eq:main_problem}.
However, there are much weaker concepts of solutions known for the Monge-Ampère equation that require lower regularity of $\p\Om$, $\gamma$, and $f$. For this, we set $\mu_f=0$ in \eqref{eq:lipschitz_of_f} for simplicity. For a convex $v\in C(\Om)$, the sub-differential $\p v(\bm{x})\subset \R^d$ at $\bm{x}\in \Om$ is defined as:
 \begin{equation*}
 \p v(\bm{x}):= \{\bm{p}\in \R^d\;|\; v(\bm{x})+\bm{p}\bm{\cdot} (\bm{y}-\bm{x})\leq v(\bm{y}),\;\; \forall\; \bm{y}\in \Om\}.
 \end{equation*}
 The Monge-Ampère measure $\mathcal{M}v$ is then defined as
\begin{equation}\label{eq:monge-measure}
\mathcal{M} v(\vartheta):= \nu\left (\cup_{\bm{x}\in \vartheta} \p v(\bm{x})\right ) \text{ for all Borel sets } \vartheta\subseteq \Om,
\end{equation}
where $\nu$ is the Lebesgue measure in $\R^d$. Observe that if $u\in C^2(\Om)$ then for all measurable $\vartheta\subset \Om$\, one has $\mathcal{M} v(\vartheta)=\int_\vartheta\det (\mathrm{D}^2 v(\bm{x}))\mathrm{d}x$. We define
 \begin{definition}[Generalised/Alexandrov solution of \eqref{eq:main_problem}]\label{def:gen_sol}
Let $\mu_f=0$ in \ref{ass:Afa}. The generalised solution $u\in C(\bar{\Om})$ of \eqref{eq:main_problem} is convex and satisfies $\mathcal{M} u =f$ and $u=\gamma$ on $\p\Om$.
 \end{definition}
 
 Following \cite{gutierrez2001monge,gallistl2023convergence} we conclude that:
\begin{proposition}[Existence, uniqueness, and regularity of generalised solutions] \label{prop:gen_sol}
For $\Om$ convex and Lipschitz, $f$ bounded and positive, $\mu_f=0$ in \eqref{eq:lipschitz_of_f}, and $\g\in C(\bar{\Om})$, the generalised solution $u\in C(\bar{\Om})$ exists and is unique. Moreover, if $f$ is $C^{0,\a}(\Om)$ as in \ref{ass:Afa}, $0<f_\Mini<f_\Maxi<\infty$, and $\gamma\in C^{2,\a}(\bar{\Om})$ as in \ref{ass:Aga}, then additionally $u\in C^{2,\alpha}_{\rm loc}(\Om)$, and for every compact subset $\vartheta\Subset \Om$ there exists $\l_\Mini,\, \l_\Maxi: \vartheta\to (0,\infty)$ such that \eqref{eq:lmlM} holds in $\vartheta$.
\end{proposition}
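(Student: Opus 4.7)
The plan is to split \Cref{prop:gen_sol} into three parts: (i) existence of an Alexandrov solution $u\in C(\bar\Om)$ under the weaker hypotheses, (ii) its uniqueness, and (iii) the interior $C^{2,\a}_{\rm loc}$ regularity with uniform Hessian bounds on compact subsets under the stronger hypotheses. All three are classical and can be assembled directly from Gutiérrez \cite{gutierrez2001monge}, but organising the argument is what I sketch below.

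For \textbf{existence}, I would employ Perron's method adapted to the Monge-Ampère operator. First approximate $f\,\mathrm{d}x$ by atomic measures $\mu_n=\sum_k f(\bm{x}_k)\,|V_k|\,\delta_{\bm{x}_k}$ on a fine partition $\{V_k\}$ of $\Om$. For each $\mu_n$ one can construct an Alexandrov solution $u_n$ explicitly as the upper envelope of affine functions whose subdifferential images cover the prescribed atomic masses; convexity of $\Om$ is used to extend $\g\in C(\bar\Om)$ to a convex function $\tilde\g$ agreeing with $\g$ on $\p\Om$, which serves as a barrier ensuring that $u_n\in C(\bar\Om)$ with $u_n=\g$ on $\p\Om$. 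Uniform $L^\infty$ bounds on $\{u_n\}$ follow from the bound $\mu_n(\Om)\leq f_{\Maxi}\,|\Om|$ together with the standard ABP-type estimate for convex functions. Weak-$*$ continuity of the Monge-Ampère measure under uniform convergence of convex functions (Lemma 1.2.3 of \cite{gutierrez2001monge}) then lets one pass to the limit along a subsequence and obtain $u\in C(\bar\Om)$ with $\mathcal M u=f$ and $u=\g$ on $\p\Om$.

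For \textbf{uniqueness}, I would invoke the comparison principle for the Monge-Ampère measure: if $v,w$ are convex on $\bar\Om$ with $\mathcal M v\geq \mathcal M w$ in $\Om$ and $v\leq w$ on $\p\Om$, then $v\leq w$ in $\Om$ (Theorem 1.4.6 in \cite{gutierrez2001monge}). Applied symmetrically to any two generalised solutions of \eqref{eq:main_problem} this immediately yields $u_1=u_2$.

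For \textbf{regularity}, the key input is Caffarelli's interior theory. Under $0<f_\Mini\leq f\leq f_\Maxi<\infty$ the solution $u$ is strictly convex in $\Om$ (Caffarelli's strict convexity theorem, see \cite[Ch.~5]{gutierrez2001monge}). Strict convexity implies that the sections
\begin{equation*}
S_h(\bm{x})=\set*{\bm{y}\in\Om \given u(\bm{y})<u(\bm{x})+\bm{p}\bm{\cdot}(\bm{y}-\bm{x})+h,\;\bm{p}\in\p u(\bm{x})}
\end{equation*}
are compactly contained in $\Om$ for every $\bm{x}\in\Om$ and sufficiently small $h>0$. Normalising these sections by affine maps and applying Caffarelli's $C^{1,\a}$ and then $C^{2,\a}$ estimates, valid because $f\in C^{0,\a}$, yields $u\in C^{2,\a}_{\rm loc}(\Om)$. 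Given $\vartheta\Subset\Om$, the resulting pointwise Hessian $\mathrm{D}^2 u$ is continuous on $\vartheta$ and satisfies $\det(\mathrm{D}^2u)=f\geq f_\Mini>0$, so no eigenvalue can vanish; combined with the $C^{2,\a}$ a priori bound this gives continuous functions $\l_\Mini,\l_\Maxi:\vartheta\to(0,\infty)$ for which \eqref{eq:lmlM} holds on $\vartheta$.

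The main obstacle is the Caffarelli regularity step: both strict convexity and the $C^{2,\a}$ estimate are deep results that rely on an affine-invariant section analysis exploiting the special geometric structure of the Monge-Ampère equation. In the actual proof I would cite the relevant theorems of \cite{gutierrez2001monge} rather than reprove them; the existence construction via Perron's method and the uniqueness via comparison are comparatively routine once the machinery of the Monge-Ampère measure is in hand.
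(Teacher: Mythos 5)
Your sketch is correct and coincides with what the paper itself does: Proposition~\ref{prop:gen_sol} is not proved in the text but is simply attributed to \cite{gutierrez2001monge,gallistl2023convergence}, and your three components --- Perron's method for existence, the comparison principle (Theorem~1.4.6 in Guti\'errez) for uniqueness, and Caffarelli's strict-convexity and interior $C^{2,\alpha}$ theory combined with $\det(\mathrm{D}^2 u)=f\geq f_\Mini>0$ to get uniform Hessian eigenvalue bounds on compacta --- are precisely the relevant results assembled in Guti\'errez's monograph. One small imprecision worth noting: in Guti\'errez's Perron argument the boundary barriers are affine functions exploiting strict convexity of $\Omega$ rather than a global convex extension of $\gamma$ (which need not exist for arbitrary $\gamma\in C(\bar\Omega)$), but this does not change the structure of the argument.
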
 
 
 \begin{remark}[Convex and concave solutions]
Above we have only discussed convex solutions to the Monge-Ampère equation. It is clear that in the case when $d$ is even, $\mu_f=0$, and $\g=0$, that there exists also concave solutions to \eqref{eq:main_problem} obtained simply by taking the negative of the convex solution. Our iterative method is well adopted for this solution as well; see \Cref{rem:concave}.  For simplicity, we will mainly discuss the convex case.
\end{remark} 

\section{Iterative linearisation}\label{sec:iterarive}
In this section, we will discuss iterative linearisation schemes for the Monge-Ampère equation. For a function $v\in C^2(\Om)$, we introduce the \emph{residual} mapping $\r(v)\in C(\Om)$ as
\begin{equation}\label{eq:residual}
  \r(v) := \det(\mathrm{D}^2 v)-f(\cdot,\del v).
\end{equation}
For $i \in \N$, let $u^i \in C^2(\Om)$ be a given approximation of $u$. The objective is to find an update $v^{i+1} \in C^2(\Om) $ satisfying 
\begin{equation*}
  \r(u^i+v^{i+1}) \cong 0 \;\; \text{ and update, },\;\; u^{i+1}:=u^i+v^{i+1}.
\end{equation*}
We introduce an auxiliary function $w$ and scalar $t\geq 0$,  satisfying $v^{i+1}:= tw$. Then, 
using a Taylor series of $\r(u^i+ tw)$ around $t =0$, we have
 \begin{equation}\label{eq:g_Taylor}
 \r(u^i+v^{i+1})=\r(u^i+tw)= \r(u^i) + \frac{\mathrm{d}}{\mathrm{d}t}(\r(u^i+t w))\Bigg|_{t=0}t + \mathcal{O}(t^2)= 0.
 \end{equation}
Recalling Jacobi's formula \eqref{eq:jacobi}, we obtain
\begin{equation*}
    \frac{\mathrm{d}}{\mathrm{d}t}\det (\mathrm{D}^2u^i+ t\mathrm{D}^2w)= \text{cof}(\mathrm{D}^2u^i+t\mathrm{D}^2w)\bm{:}\mathrm{D}^2w.
\end{equation*}
The chain rule further gives 
\begin{equation*}
    \frac{\mathrm{d}}{\mathrm{d}t} f(\cdot,\nabla( u^i + tw))= \del_{\bm{y}} f(\cdot, \nabla( u^i + tw)) \bm{\cdot} \nabla w.
\end{equation*}
Thus, we can express the second term in the right-hand side of \eqref{eq:g_Taylor} as
\begin{equation*}
  \frac{\mathrm{d}}{\mathrm{d}t}(\r(u^i+t w))\Bigg|_{t=0} t=\text{cof}(\mathrm{D}^2u^i)\bm{:}\mathrm{D}^2(tw) -\del_{\bm{y}}f(\cdot,\del u^i)\bm{\cdot} \del(t w).
\end{equation*}
Introducing the symmetric matrix and the vector field
\begin{equation}\label{eq:g_jac_for}
\bm{C}^i:= \text{cof}(\mathrm{D}^2u^i), \quad  \bm{q}^i:= \del_{\bm{y}}f(\cdot,\del u^i),
\end{equation}
respectively, recalling $v^{i+1}=tw$, and ignoring the higher order $\mathcal{O}(t^2)$ terms, we obtain a linear approximation for $v^{i+1}$, i.e. 
\begin{equation}\label{eq:cof_rho}
  \bm{C}^i\bm{:} \mathrm{D}^2v^{i+1} -\bm{q}^i\bm{\cdot} \del v^{i+1} = -\r(u^i).
\end{equation}

\subsubsection{Newton iteration}
For Newton iteration we solve the following system
\begin{equation}\label{eq:poisson_itern}
  \left\{
  \begin{aligned}
    \bm{C}^i\bm{:} \mathrm{D}^2 v^{i+1} - \bm{q}^i \bm{\cdot} \del v^{i+1} &= -\r(u^i),         &&\quad \text{in } \Omega, \\
    v^{i+1} &= \gamma - u^i  ,                        &&\quad \text{on } \partial\Omega.
  \end{aligned}
  \right.
\end{equation}
The iterations are well-defined by the Lax-Milgram Theorem \cite{evans2022partial} if $\bm{C}^i$ is either positive or negative symmetric definite, which is the case when $u^i$ is strictly convex or concave, respectively, and the advection term $\bm{q}^i$, bounded in $L^\infty$ above by $\mu_f$ defined in \eqref{eq:lipschitz_of_f}, is small. However, if $\bm{C}^i$ is neither positive nor negative definite, then the existence of solutions of the boundary value problem \eqref{eq:poisson_itern} is not guaranteed, since it violates the coercivity condition of Lax-Milgram.
Thus, the Newton iteration requires convexity/concavity of iterates for well-posedness, which can be hard to enforce. This leads to instability and strong sensitivity for the initial guess as we will see in \Cref{sec:numRes}. Consequently, some remediation is needed, for which we propose the following.
 

\subsubsection{L-Scheme}\label{subsec:L-scheme}
We take inspiration from \cite{POP2004365,MITRA20191722, stokke2023adaptive, javed2025robust} where it was shown that replacing $u^i$-dependent coefficients by carefully selected constants improved the convergence properties of iterations for nonlinear elliptic problems. Here, we replace the cofactor matrix dependent on $u^i$ by the identity matrix scaled by a global lumped constant $\Lambda^i \in \R\setminus\{0\}$ which may depend on $u^i$, i.e., $\bm{C}^i\mapsto \Lambda^i \mathbb{I}_d$.
Moreover, we set the advection-like term $\bm{q}^i\equiv \bm{0}$ for stability. Observing that $\tr(\Lambda^i \mathbb{I}_d\mathrm{D}^2 v)=\Lambda^i \tr( \mathrm{D}^2 v) =\Lambda^i \D v$, we get the Dirichlet problem for the update $v^{i+1}$:
\begin{subequations}\label{eq:L-scheme}
\begin{equation}\label{eq:poisson_iter}
\left\{
\begin{split}
       \Lambda^i \D v^{i+1} &= -\r(u^i),  &&\text{in} \quad \Om,\\
    v^{i+1} &=  \gamma -u^i, &&\text{on} \quad \p \Om,
\end{split}
\right.
\end{equation}
with $u^{i+1}:\Om \to \R$ being computed afterwards through
\begin{equation}
  u^{i+1} := u^i + v^{i+1} .
\end{equation} 
\end{subequations}
The above scheme will henceforth be referred to as the \textbf{L-scheme}. We show below that it has global well-posedness and consistency properties as opposed to Newton.

\begin{theorem}[Well-posedness and consistency of the L-scheme]\label{thm:exists}
    Let $\p\Om$ be $C^{2,\a}$, and \ref{ass:Afa}--\ref{ass:Aga} hold with $\a\in [0,1]$. Let the iteration index be denoted by $i\in \N$, the initial condition $u^0\in C^{1,1}(\bar{\Om})$, and $\{\Lambda^i\}_{i\in \N}\subset [\Lambda_\Mini,\Lambda_\Maxi]$ be a sequence of constants for fixed $0<\Lambda_\Mini\leq \Lambda_\Maxi<\infty$. Then, there exists a unique sequence $\{u^{i}\}_{i\in \N} \subset  C^{1,1}(\bar{\Om})$ satisfying \eqref{eq:L-scheme} almost everywhere for each $i\in \N$. Additionally, if $\a>0$ and $u^0\in C^{2,\a}(\Om)$, then $u^i\in C^{2,\a}(\Om)$ for all $i\in \N$. Furthermore, the scheme is consistent in the sense that the update $v^{i+1}=u^{i+1}-u^{i}= 0$, in $\bar{\Om}$ if and only if $u^i=u$ where $u\in C^{1,1}(\bar{\Om})$ is the solution of \eqref{eq:main_problem} described in \Cref{prop:exist_class}. 
\end{theorem}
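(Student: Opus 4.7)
The plan is to induct on $i$ for the well-posedness and regularity parts, and to derive consistency from the uniqueness statement in \Cref{prop:exist_class} together with the maximum principle for harmonic functions. Because \eqref{eq:poisson_iter} is simply a constant-coefficient Poisson problem for $v^{i+1}$ whose right-hand side and boundary datum are completely determined by the previous iterate, each inductive step reduces to a direct application of linear elliptic theory.

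For the base case, $u^0\in C^{1,1}(\bar\Om)$ by hypothesis. Assuming $u^i\in C^{1,1}(\bar\Om)$, boundedness of $\mathrm{D}^2 u^i$ gives $\det(\mathrm{D}^2 u^i)\in L^\infty(\Om)$, and the Lipschitz-in-$\bm y$ bound of \ref{ass:Afa} combined with $\del u^i\in L^\infty(\Om)$ gives $f(\cdot,\del u^i)\in L^\infty(\Om)$; hence $\r(u^i)\in L^\infty(\Om)$. Meanwhile $\g-u^i\in C^{1,1}(\p\Om)$ by \ref{ass:Aga}. Classical $W^{2,p}$ regularity for the Poisson problem on a $C^{2,\a}$ domain then produces a unique $v^{i+1}$ whose second derivatives lie in $L^p$ for every finite $p$, and a lifting argument that subtracts the $C^{2,\a}$ extension of the boundary data from $v^{i+1}$ upgrades this to $C^{1,1}(\bar\Om)$; uniqueness is immediate from the weak maximum principle applied to the difference of two solutions. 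Hence $u^{i+1}=u^i+v^{i+1}\in C^{1,1}(\bar\Om)$ and \eqref{eq:L-scheme} holds a.e., closing the induction. The $C^{2,\a}$ refinement is the same induction run with Schauder estimates in place of $W^{2,p}$: if $u^i\in C^{2,\a}(\Om)$, then $\det(\mathrm{D}^2 u^i)\in C^{0,\a}(\Om)$ (polynomial dependence on the entries) and $f(\cdot,\del u^i)\in C^{0,\a}(\Om)$ (composing the $C^{0,\a}$-in-$\bm x$, Lipschitz-in-$\bm y$ behaviour of $f$ with $\del u^i\in C^{1,\a}$), so Schauder theory applied to \eqref{eq:poisson_iter} yields $v^{i+1}\in C^{2,\a}(\Om)$ and hence $u^{i+1}\in C^{2,\a}(\Om)$.

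For consistency, one direction is trivial: if $u^i=u$, then $\r(u^i)\equiv 0$ in $\Om$ and $\g-u^i\equiv 0$ on $\p\Om$, so $v^{i+1}$ is harmonic with zero boundary data and vanishes identically. Conversely, if $v^{i+1}\equiv 0$ in $\bar\Om$, then $\r(u^i)=0$ a.e.\ and $u^i=\g$ on $\p\Om$, so $u^i$ is a $C^{1,1}$ function satisfying \eqref{eq:main_problem} in the a.e.\ sense; the uniqueness statement of \Cref{prop:exist_class} (invoked within the convexity picture underlying \ref{ass:Aga}) then identifies $u^i$ with $u$. The main subtlety I anticipate concerns the $C^{1,1}(\bar\Om)$ claim in the well-posedness part: $L^\infty$ right-hand sides for the Laplacian generically give only $W^{2,p}$ solutions for every finite $p$, but not $W^{2,\infty}$ (the familiar BMO gap), so establishing genuine $C^{1,1}(\bar\Om)$ will require either a careful use of the $C^{2,\a}$ boundary extension to absorb the non-smooth contribution or a mild relaxation of the claim to $W^{2,p}(\Om)$ for all finite $p$, which in either case still supports the almost-everywhere interpretation of the scheme used throughout.
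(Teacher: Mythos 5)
Your proposal follows essentially the same route as the paper's proof: induction on $i$, the observation that $u^i\in C^{1,1}(\bar\Omega)$ forces $\mathrm{D}^2u^i$ to exist a.e.\ and be bounded (Rademacher), hence $\rho(u^i)\in L^\infty(\Omega)$, linear elliptic theory for the Poisson update, Schauder estimates for the $C^{2,\alpha}$ refinement (the paper cites \cite[Theorem 6.24]{gilbarg1977elliptic} for exactly the step you argue), and consistency from $v^{i+1}=0\Rightarrow\rho(u^i)=0$ in $\Omega$, $u^i=\gamma$ on $\partial\Omega$, followed by the uniqueness statement of \Cref{prop:exist_class}.

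The only divergence is the regularity class of the update, and it is precisely the point you flag. The paper settles this step by a single citation, \cite[Theorem 8.34]{gilbarg1977elliptic}, to obtain a unique $v^{i+1}\in C^{1,1}(\bar\Omega)$; you instead obtain $W^{2,p}(\Omega)$ for every finite $p$ and openly concede that the upgrade to $C^{1,1}(\bar\Omega)$ is not established. Your scepticism is well founded: with a right-hand side that is merely $L^\infty$ one cannot in general expect second derivatives in $L^\infty$, the boundary lifting you sketch cannot repair this because the obstruction is interior, and the cited result is a global gradient-H\"older theorem giving $C^{1,\alpha}(\bar\Omega)$ with $\alpha<1$ rather than $C^{1,1}$. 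So the imprecision sits in the paper's stated regularity class at least as much as in your write-up. Note, moreover, that your weaker class is self-consistent under the iteration: if $u^i\in W^{2,p}(\Omega)$ for all finite $p$ then $\rho(u^i)\in L^p(\Omega)$ for all finite $p$ and $\nabla u^i$ is bounded, so well-posedness, the a.e.\ interpretation of \eqref{eq:L-scheme}, the $C^{2,\alpha}$ induction, and the consistency argument all go through unchanged; only the literal $C^{1,1}(\bar\Omega)$ membership of the iterates remains at the level of the citation. One small caveat you share with the paper: concluding $u^i=u$ from $\rho(u^i)=0$ and $u^i=\gamma$ on $\partial\Omega$ uses the uniqueness of \Cref{prop:exist_class}, which is uniqueness among convex solutions, and neither argument verifies convexity of $u^i$ at that point.
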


\begin{proof}
We prove the theorem by induction. Let $u^i\in C^{1,1}(\bar{\Om})$ for some $i\in \N$. 
    Due to Rademacher's theorem \cite[Chapter 5.8]{evans2022partial}, we get that ${\rm D}^2 u^i$ exists almost everywhere in $\Om$, and is essentially bounded. Thus, $\r(u^i)\in L^\infty(\Om)$. Then, by \cite[Theorem 8.34]{gilbarg1977elliptic}, there exists a unique solution $v^{i+1}\in C^{1,1}(\bar{\Om})$ of \eqref{eq:poisson_iter}, and thus $u^{i+1}\in C^{1,1}(\bar{\Om})$ exists uniquely.

If in addition $u^i\in C^{2,\a}(\Om)$, then ${\rm D}^2 u^i\in C^{0,\a}(\Om)$. Since $f(\cdot,\bm{y})$ is $C^{0,\a}(\Om)$ from \ref{ass:Afa}, this implies $f(\bm{\cdot},\del u^i(\bm{\cdot}))$ is also  
$C^{0,\a}(\Om)$, as $f(\bm{x},\cdot)$ and $\nabla u^i$ are both Lipschitz. This implies $\r(u^i)\in C^{0,\a}(\Om)$. Applying \cite[Theorem 6.24]{gilbarg1977elliptic} we get that $v^{i+1}\in C^{2,\a}(\Om)$ and consequently $u^{i+1}\in C^{2,\a}(\Om)$.

Finally, $v^{i+1}=0$ implies $\r(u^i)=0$ in $\Om$, and $u^i=\gamma$ on $\p\Om$. The uniqueness of solution from \Cref{prop:exist_class} implies $u^i=u$. The equivalence in the opposite direction is trivial.
\end{proof}
 In subsequent sections we will show analytically and numerically that if $\Lambda^i$ is large enough (specifically $\Lambda^i\geq \l_\Maxi^{d-1}$), then the L-scheme \emph{converges linearly} under convexity of the iterates. Furthermore, the L-scheme \emph{solves a Poisson problem} $\D v^{i+1}=-\rho(u^i)/\Lambda^i$ in each iteration which makes it \emph{amenable to the application of different acceleration techniques} like highly efficient preconditioners, Green's functions, model order reduction techniques, etc. This will be explored in \Cref{sec:two-strategies}.

\begin{remark}[Modification for the concave case]\label{rem:concave}
    For obtaining a concave solution to \eqref{eq:main_problem} when it is possible, e.g., when $d$ is even, we simply need to set $\Lambda^i$ as negative constants instead of a positive constant. The convergence analysis is identical to what follows.
\end{remark}

\section{Convergence analysis}
In the following section, we are going to prove the convergence of the L-Scheme. In \Cref{sec:classical}, we show the convergence when a classical solution of \eqref{eq:main_problem} exists. Additionally, in \Cref{sec:generalised}, we prove the convergence for a generalised solution.
To prove convergence, we introduce the error $e^i$, defined for all $i \in \N$ as 
\begin{equation*}
  e^i := u^i - u,
\end{equation*}
where $u^i$ is the $i$-th iterate, and $u$ the solution of \eqref{eq:main_problem}. 

\subsection{Convergence of classical solutions}\label{sec:classical}

\begin{theorem}[Convergence of the L-scheme for classical solutions]\label{thm:convergence} 
Let \ref{ass:Afa}--\ref{ass:Aga} hold, and $\p\Om$ be a $C^2$-boundary.
Consider $u\in C^{1,1}(\Om)$ (alternately $u\in C^{2}(\bar{\Om})$) as the convex classical solution of \eqref{eq:main_problem} satisfying the condition \eqref{eq:lmlM} for some $\l_\Mini>0$. 
Let $\{u^{i}\}_{i\in \N}\in C^{1,1}(\bar{\Om})$ denote the L-scheme iterates generated by solving \eqref{eq:L-scheme}. 
Let $\l^i_{\Maxi}(\bm{x})>0$ be the maximum eigenvalue of $\mathrm{D}^2u^i(\bm{x})$. We fix an iteration index $i\in \N$, and let $u^i=\gamma$ on $\p\Om$. 
Choose a lumped constant $\Lambda^i$ satisfying
  \begin{equation}\label{eq:lumbed_constant}
    \Lambda^i \geq \|\l_{\Maxi}^i(\bm{x})\|^{d-1}_{L^\infty(\Om)} .
  \end{equation}
Assume that the error $e^i=u^i-u$ is convex in $\Om$. If $\mu_f$ is smaller than a positive constant dependent only on $\Om$ and $f(\cdot,\bm{0})$, then
\begin{equation}\label{eq:contraction_L2}
\Vert \D e^{i+1}\Vert_{L^2(\Om)} \leq q_i \Vert \D e^i \Vert_{L^2(\Om)} \;\; \text{ for a contraction rate } q_i\in (0,1).
\end{equation}
Moreover, if $\mu_f= 0$, and $e^i$ are convex for all $i\in \N$, then
\begin{enumerate}[label=(\roman*)]
  \item The choice $\Lambda^i = \|\D u^0-(d-1)\l_\Mini\|_{L^\infty(\Om)}^{d-1}$ satisfies \eqref{eq:lumbed_constant} for all $ i \geq 0$.
  \item If for all $i \geq 0$, the $\Lambda^i$ satisfying \eqref{eq:lumbed_constant} are bounded from above by some $\bar{\Lambda} > 0$, e.g., for the choice of constant $\Lambda^i$ in (i), then the fixed-point method converges in $C^{1,1}(\bar{\Om})$ and linearly in $H^2(\Om)$ with contraction rate $q:= 1 - \left(f_\Mini/\bar{\Lambda}^{\frac{d}{d-1}}\right)$.
\end{enumerate}
\end{theorem}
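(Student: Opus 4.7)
The plan is to derive a pointwise identity for $\Lambda^i \D e^{i+1}$ by subtracting the equation satisfied by $u$ from the L-scheme iteration, to control the resulting Frobenius inner product via the positive-semidefinite structure forced by the convexity assumption on $e^i$, and then to close the estimate in $L^2(\Om)$ using standard elliptic regularity. Subtracting $\det(\mathrm{D}^2 u) = f(\cdot,\del u)$ and applying the mean-value identity \eqref{eq:DetMeanValue} to $\det(\mathrm{D}^2 u^i) - \det(\mathrm{D}^2 u)$ yields, for some $t \in [0,1]$ and $M^i := t\mathrm{D}^2 u^i + (1-t)\mathrm{D}^2 u$,
\begin{equation*}
-\r(u^i) = -\text{cof}(M^i)\bm{:}\mathrm{D}^2 e^i + \bigl[f(\cdot,\del u^i) - f(\cdot,\del u)\bigr].
\end{equation*}
Inserting this into $\Lambda^i \D v^{i+1} = -\r(u^i)$ and writing $\Lambda^i \D e^i = \Lambda^i \mathbb{I}_d\bm{:}\mathrm{D}^2 e^i$ gives the key identity
\begin{equation*}
\Lambda^i \D e^{i+1} = \bigl[\Lambda^i \mathbb{I}_d - \text{cof}(M^i)\bigr]\bm{:}\mathrm{D}^2 e^i + \bigl[f(\cdot,\del u^i) - f(\cdot,\del u)\bigr],
\end{equation*}
with $e^{i+1} = 0$ on $\p\Om$.

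Convexity of $e^i$ implies $\mathrm{D}^2 u^i \succeq \mathrm{D}^2 u \succeq \l_\Mini \mathbb{I}_d$, so $M^i$ is positive definite with $\l_\Maxi(M^i) \leq \l^i_\Maxi$ pointwise, while Minkowski's determinant inequality on the convex combination gives $\det(M^i) \geq \det(\mathrm{D}^2 u) = f \geq f_\Mini$. Since the eigenvalues of $\text{cof}(M)$ equal $\det(M)/\l_k(M)$ for symmetric positive-definite $M$, combining with \eqref{eq:lumbed_constant} yields
\begin{equation*}
\frac{f_\Mini}{(\Lambda^i)^{1/(d-1)}}\mathbb{I}_d \preceq \text{cof}(M^i) \preceq \Lambda^i \mathbb{I}_d,
\end{equation*}
so $\Lambda^i \mathbb{I}_d - \text{cof}(M^i)$ is PSD with top eigenvalue at most $\Lambda^i - f_\Mini/(\Lambda^i)^{1/(d-1)}$. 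Since $\mathrm{D}^2 e^i \succeq 0$, the inequality $A\bm{:}B \leq \l_\Maxi(A)\,\text{tr}(B)$ for PSD $A,B$ turns the identity into the pointwise bound
\begin{equation*}
0 \leq \bigl[\Lambda^i \mathbb{I}_d - \text{cof}(M^i)\bigr]\bm{:}\mathrm{D}^2 e^i \leq \bigl[\Lambda^i - f_\Mini/(\Lambda^i)^{1/(d-1)}\bigr]\D e^i.
\end{equation*}
Taking $L^2(\Om)$-norms, controlling the $f$-difference by $\mu_f \|\del e^i\|_{L^2(\Om)}$ via \ref{ass:Afa}, and applying the Calder\'on-Zygmund-type estimate $\|\del v\|_{L^2(\Om)} \leq C_\Om \|\D v\|_{L^2(\Om)}$ on $H^1_0(\Om)\cap H^2(\Om)$ yields
\begin{equation*}
\|\D e^{i+1}\|_{L^2(\Om)} \leq \Bigl[1 - \tfrac{f_\Mini}{(\Lambda^i)^{d/(d-1)}} + \tfrac{C_\Om \mu_f}{\Lambda^i}\Bigr]\|\D e^i\|_{L^2(\Om)},
\end{equation*}
which is a strict contraction for $\mu_f$ sufficiently small; this establishes \eqref{eq:contraction_L2}.

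For part (i), with $\mu_f = 0$, monotonicity of det on PSD matrices applied to $\mathrm{D}^2 u^i \succeq \mathrm{D}^2 u$ gives $\det(\mathrm{D}^2 u^i) \geq f$, whence $\D v^{i+1} = -\r(u^i)/\Lambda^i \leq 0$. The weak maximum principle on \eqref{eq:poisson_iter} with $v^{i+1} = 0$ on $\p\Om$ then yields $v^{i+1} \geq 0$ and $\D u^{i+1} \leq \D u^i$ pointwise, so by induction $\D u^i \leq \D u^0$. Since the other $d-1$ eigenvalues of $\mathrm{D}^2 u^i$ are at least $\l_\Mini$, this forces $\l^i_\Maxi \leq \D u^i - (d-1)\l_\Mini \leq \D u^0 - (d-1)\l_\Mini$; taking the $L^\infty$-norm and raising to the power $d-1$ proves (i). Part (ii) then follows immediately from the previous paragraph: with $\mu_f = 0$ and $\Lambda^i \leq \bar{\Lambda}$ the rate $q = 1 - f_\Mini/\bar{\Lambda}^{d/(d-1)}$ is attained; linear $H^2(\Om)$-convergence follows from the elliptic bound $\|e^i\|_{H^2(\Om)} \leq C_\Om \|\D e^i\|_{L^2(\Om)}$, and $C^{1,1}(\bar{\Om})$-convergence from the uniform $C^{1,1}$-bound of Theorem \ref{thm:exists} combined with uniqueness of the limit $u$.

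The main obstacle is expressing the smallness condition on $\mu_f$ in terms of $\Om$ and $f(\cdot,\bm{0})$ alone, since $\Lambda^i$ itself is controlled by $\l^i_\Maxi$ and hence by the iterate. Establishing an a priori upper bound on $\l^i_\Maxi$ purely from the data so that the contraction rate in the second paragraph is uniformly less than one is the delicate step in the $\mu_f > 0$ case; for $\mu_f = 0$ the monotonicity argument of the third paragraph supplies such a bound directly through $\D u^0$, sidestepping the difficulty.
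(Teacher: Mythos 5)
Your argument is essentially the paper's: the same mean-value identity \eqref{eq:DetMeanValue} for the determinant, the same spectral control of $\mathrm{cof}(M^i)$ exploiting convexity of $e^i$, the same elliptic-regularity closure in $L^2$, and your maximum-principle derivation of (i) (via $\rho(u^i)\geq 0$, hence $\D u^{i+1}\leq \D u^i$) is a clean variant of the paper's monotone-sequence argument. The one genuine gap is the one you flag yourself, and it does matter for the statement as written: with only the lower bound $\mathrm{cof}(M^i)\succeq \frac{f_\Mini}{(\Lambda^i)^{1/(d-1)}}\mathbb{I}_d$, your contraction condition becomes $\mu_f < f_\Mini/\bigl(C_\Om(\Lambda^i)^{1/(d-1)}\bigr)$, which depends on the iterate through $\Lambda^i$, whereas \eqref{eq:contraction_L2} requires a threshold depending only on the data. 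The missing observation is that convexity of $e^i$ together with \eqref{eq:lmlM} gives $M^i\succeq \mathrm{D}^2u\succeq \l_\Mini\mathbb{I}_d$, so every eigenvalue of $\mathrm{cof}(M^i)$, being a product of $d-1$ eigenvalues of $M^i$, is at least $\l_\Mini^{d-1}$; hence $\mathrm{cof}(M^i)\succeq \max\{f_\Mini/\l^i_\Maxi,\,\l_\Mini^{d-1}\}\,\mathbb{I}_d$ (this is exactly the quantity $\xi$ in \Cref{lem:Classical}). With this iterate-independent lower bound the gap in your pointwise estimate is at least $\l_\Mini^{d-1}/\Lambda^i$, and the smallness condition $\mu_f<\l_\Mini^{d-1}/C_\Om$ suffices; no a priori bound on $\l^i_\Maxi$ is needed in the $\mu_f>0$ case, so the "delicate step" you describe simply disappears.

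Two smaller points. First, in (ii) your justification of $C^{1,1}(\bar{\Om})$ convergence ("uniform $C^{1,1}$ bound plus uniqueness of the limit") only yields $C^{1,\beta}$ convergence for $\beta<1$ by compactness; the paper instead uses the pointwise inequality $0\leq \D e^{i+1}\leq q\,\D e^i$ (which your estimate also delivers when $\mu_f=0$), so that $\|\D e^i\|_{L^\infty(\Om)}\to 0$, and then the global elliptic estimate of \cite[Theorem 8.34]{gilbarg1977elliptic} applied to $e^i$ with zero boundary data. Second, your rate $1-f_\Mini/(\Lambda^i)^{d/(d-1)}$ is slightly weaker than the paper's $1-\xi/\Lambda^i$, but it is exactly the rate $q$ claimed in (ii), so that part of your argument is fine as stated.
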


The proof of \Cref{thm:convergence} relies heavily on the following inequality:

\begin{lemma}[An important inequality]\label{lem:Classical}
    Under the assumptions and definitions of \Cref{thm:convergence} we have the following estimate almost everywhere in $\Om$:
\begin{align}
     0 \leq \left(1 - \frac{(\l_{\Maxi}^i)^{d-1}}{\Lambda^i}\right) \D e^i \leq \D e^{i + 1} + \frac{e^i_f}{\Lambda^i} \leq \left(1 - \frac{1}{\Lambda^i}\max\left\{\frac{f_\Mini}{\l_\Maxi^i},\l_\Mini^{d-1} \right\}\right)\D e^i.\label{eq:to_proof_thm}
\end{align}
where $e^i_f:=f(\cdot,\del u^i) - f(\cdot,\del u)$, 
\end{lemma}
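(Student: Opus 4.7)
I would start by converting the L-scheme into an exact recursion for $\D e^{i+1}$. Taking the Laplacian of $u^{i+1}-u^i = v^{i+1}$ and using \eqref{eq:poisson_iter}, one has $\Lambda^i(\D e^{i+1} - \D e^i) = -\r(u^i)$. Since $u$ solves \eqref{eq:main_problem}, i.e.\ $\det(\mathrm{D}^2 u) = f(\cdot,\del u)$, I would expand
\begin{equation*}
\r(u^i) \;=\; \bigl[\det(\mathrm{D}^2 u^i)-\det(\mathrm{D}^2 u)\bigr] \;-\; e^i_f,
\end{equation*}
which isolates the determinant difference as the only genuinely nonlinear quantity to linearise.

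Next I would apply the Jacobi-type mean value identity \eqref{eq:DetMeanValue}: for some $t=t(\bm x)\in[0,1]$, with $\tilde{\bm A}:=t\,\mathrm{D}^2 u^i+(1-t)\,\mathrm{D}^2 u$,
\begin{equation*}
\det(\mathrm{D}^2 u^i)-\det(\mathrm{D}^2 u) \;=\; \mathrm{cof}(\tilde{\bm A})\bm{:}\mathrm{D}^2 e^i.
\end{equation*}
Substituting into the recursion and using $\D e^i = \mathbb{I}_d\bm{:}\mathrm{D}^2 e^i$ produces the central identity
\begin{equation*}
\D e^{i+1} + \frac{e^i_f}{\Lambda^i} \;=\; \left(\mathbb{I}_d - \frac{\mathrm{cof}(\tilde{\bm A})}{\Lambda^i}\right)\bm{:}\mathrm{D}^2 e^i,
\end{equation*}
(up to the sign convention chosen on $e^i_f$). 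The remainder of the proof then reduces to a spectral comparison of $\mathrm{cof}(\tilde{\bm A})$ with $\Lambda^i\mathbb{I}_d$, paired against $\mathrm{D}^2 e^i \succeq \bm 0$; this last inequality is exactly where the hypothesis that $e^i$ is convex enters.

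For the spectral bounds, since $\mathrm{D}^2 e^i \succeq \bm 0$ one has the matrix ordering $\mathrm{D}^2 u \preceq \tilde{\bm A} \preceq \mathrm{D}^2 u^i$, so the eigenvalues of $\tilde{\bm A}$ lie in $[\l_\Mini,\l_\Maxi^i]$, and monotonicity of $\det$ on the cone of positive definite matrices gives $\det(\tilde{\bm A}) \geq \det(\mathrm{D}^2 u) = f(\cdot,\del u) \geq f_\Mini$. The eigenvalues of $\mathrm{cof}(\tilde{\bm A}) = \det(\tilde{\bm A})\,\tilde{\bm A}^{-1}$ are $\det(\tilde{\bm A})/\mu_k$, where $\mu_1\leq\cdots\leq\mu_d$ are the eigenvalues of $\tilde{\bm A}$. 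The largest is bounded above by $(\l_\Maxi^i)^{d-1}$, while the smallest can be lower-bounded in two independent ways --- as $\prod_{k<d}\mu_k \geq \l_\Mini^{d-1}$ and as $\det(\tilde{\bm A})/\mu_d \geq f_\Mini/\l_\Maxi^i$ --- which is precisely the origin of the $\max\{\cdot,\cdot\}$ on the right of \eqref{eq:to_proof_thm}.

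The final step is the standard observation that $\bm P\preceq \bm Q$ together with $\bm R\succeq \bm 0$ implies $\bm P\bm{:}\bm R\leq \bm Q\bm{:}\bm R$ (via $\mathrm{tr}(\bm R^{1/2}(\bm Q-\bm P)\bm R^{1/2})\geq 0$). Applying this to the matrix ordering of $\mathrm{cof}(\tilde{\bm A})/\Lambda^i$ and pairing with $\mathrm{D}^2 e^i$ yields all three inequalities of \eqref{eq:to_proof_thm} at once. The step I expect to be most delicate is verifying that $\tilde{\bm A}$ is genuinely positive definite almost everywhere: convexity of $e^i$ yields $\mathrm{D}^2 u^i \succeq \mathrm{D}^2 u \succeq \l_\Mini\mathbb{I}_d$, and this propagates to $\tilde{\bm A}$ by convex combination. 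Without this, $\mathrm{cof}(\tilde{\bm A})$ need not be positive semi-definite, the determinant-monotonicity lower bound $\det(\tilde{\bm A})\geq f_\Mini$ fails, and the entire chain of inequalities collapses --- so this hypothesis is really load-bearing for the whole estimate.
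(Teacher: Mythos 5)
Your proposal is correct and essentially reproduces the paper's proof: the same recursion $\D e^{i+1}=\D e^i-\rho(u^i)/\Lambda^i$, the same mean-value/cofactor identity with $\bm{A}^i=t\,\mathrm{D}^2u^i+(1-t)\,\mathrm{D}^2u$, and the same eigenvalue bounds $\max\{f_\Mini/\l_\Maxi^i,\,\l_\Mini^{d-1}\}\,\mathbb{I}_d\preceq\mathrm{cof}(\bm{A}^i)\preceq(\l_\Maxi^i)^{d-1}\,\mathbb{I}_d$, your trace-monotonicity step being just a repackaging of the paper's explicit spectral decomposition of $\mathrm{cof}(\bm{A}^i)$ paired with $\mathrm{D}^2e^i\succeq\bm{0}$. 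The sign you hedge on is the paper's own inconsistency: with $e_f^i=f(\cdot,\nabla u^i)-f(\cdot,\nabla u)$ the algebra yields $\D e^{i+1}-e_f^i/\Lambda^i$ on the left (as in the paper's proof), not the $+$ printed in the lemma statement.
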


\begin{remark}[Convexity assumption on the errors] The major assumption in \Cref{thm:convergence} is that the errors $e^i$ are convex functions for all $i\in \N$. Proving this statement can be quite technical and beyond the scope of this work, see \cite{korevaar1983capillary} for some requirements on convexity of the solution for elliptic problems. For $\mu_f=0$  however, the convexity assumption is consistent in the sense that convexity of $e^i$ implies $\D e^{i+1}\geq 0$ from \eqref{eq:to_proof_thm}, which holds when $e^{i+1}$ is convex. Thus,  there is no logical contradiction in all the iterates being convex.
\end{remark}

    Since $u$ is strictly convex, $\mathrm{D}^2 u$ is positive definite. For later use, we introduce the matrix $\bm{A}^i \in \R^{d \times d}$ defined for some $t\in [0,1]$ as
\begin{equation*}
  \bm{A}^i := t\mathrm{D}^2u^i + (1-t)\mathrm{D}^2u.
\end{equation*}
 First, we look at the implications of the assumption that $u^i-u$ is convex. 
It means that 
    \begin{align}\label{eq:D2u_bounds}
        0\prec \l_\Mini \mathbb{I}_d\overset{\eqref{eq:lmlM}}\preceq \mathrm{D}^2 u \preceq \bm{A}^i \preceq \mathrm{D}^2 u^i\preceq \l^i_\Maxi \mathbb{I}_d,
    \end{align}
almost everywhere in $\Om$.    Consequently, if $\l_j, \l^i_j \in [\l_\Mini,\l_\Maxi^i]$ are the eigenvalues of $\mathrm{D}^2 u$ and $\bm{A}^i$, respectively, for $j = 1,2,\ldots, d$, then we have
\begin{align*}
        &0\prec \frac{1}{\l_\Maxi^i} \mathbb{I}_d\preceq (\bm{A}^i)^{-1} \preceq (\mathrm{D}^2 u)^{-1}\preceq \frac{1}{\l_\Mini} \mathbb{I}_d,\quad (\bm{A}^i)^{-1} \leq \frac{1}{\min \l_j^i}\mathbb{I}_d,\\
        &f_\Mini\leq f\overset{\eqref{eq:main_problem}}{=}\Pi_{j=1}^d \l_j =\det(\mathrm{D}^2 u)\leq \det(\bm{A}^i)=\Pi_{j=1}^d \l^i_j\leq \left(\l^i_\Maxi\right)^d .
\end{align*}
Together, these imply
 \begin{align}\label{eq:cof_ineq}
    &\mathrm{cof}(\bm{A}^i)=
        \det(\bm{A}^i)(\bm{A}^i)^{-1}\begin{cases}
        \preceq \frac{\Pi_{j=1}^d \l_j^i}{\min \l_j^i} \mathbb{I}_d\preceq \left(\l_\Maxi^i\right)^{d-1} \mathbb{I}_d,\\
        \succeq (\det(\mathrm{D}^2 u)/\l^i_\Maxi) \mathbb{I}_d\overset{\eqref{eq:boundedness}}\succeq \max\limits_\Om\{f_\Mini/\l^i_\Maxi,\, \l_\Mini^{d-1}\} \mathbb{I}_d.
    \end{cases}
\end{align} 
We define $\xi:=\max\limits_\Om\{f_\Mini/\l^i_\Maxi,\, \l_\Mini^{d-1}\}$.

\begin{proof}[\textbf{Proof of \Cref{lem:Classical}}]
Using the definitions of the L-scheme \eqref{eq:L-scheme} and the linearity of the Laplacian, we expand
\begin{equation}\label{eq:err_exp}
  \D e^{i+1} = \D (u^{i+1} -u) = \D(u^i + v^{i+1} - u) = \D e^i + \D v^{i+1} = \D e^i - \frac{\rho(u^i)}{\Lambda^i}.
\end{equation}
Since $\det (\mathrm{D}^2u) = f(\cdot,\del u),$ the residual $\r(u^i)$ is given by
\begin{align*}
  \rho(u^i) &= \det(\mathrm{D}^2u^i) - f(\cdot,\del u^i)= \det(\mathrm{D}^2u^i) - f(\cdot,\del u) + f(\cdot,\del u)  - f(\cdot,\del u^i)\\
    &= \det(\mathrm{D}^2u^i) - \det(\mathrm{D}^2u) - e_f^i . 
\end{align*}
Since both $\mathrm{D}^2u^i$ and $\mathrm{D}^2u$ are symmetric, we conclude using \eqref{eq:DetMeanValue} that there exists a $t \in (0,1)$, such that 
\begin{equation}\label{eq:residual_proof}
  \rho(u^i) = \text{cof}(t\mathrm{D}^2u^i+ (1-t)\mathrm{D}^2u)\bm{:} \mathrm{D}^2e^i  - e_f^i=\text{cof}(\bm{A}^i)\bm{:} \mathrm{D}^2e^i  - e_f^i .
\end{equation}
Substituting \eqref{eq:residual_proof} in \eqref{eq:err_exp} leads to the equation
\begin{equation}\label{eq:rewr_err}
  \D e^{i+1} -\frac{e_f^i}{\Lambda^i} = \D e^i - \frac{1}{\Lambda^i}(\text{cof}(\bm{A^i})\bm{:}\mathrm{D}^2e^i).
\end{equation} 
Using $\D e^i = \mathbb{I}_d\bm{:} \mathrm{D}^2 e^i$ and the linearity of the Frobenius product, we can write \eqref{eq:rewr_err} as 
\begin{equation}\label{eq:err_step2}
  \D e^{i + 1}  -\frac{e_f^i}{\Lambda^i}= \D e^i - \frac{1}{\Lambda^i} \text{cof}(\bm{A}^i) \bm{:} \mathrm{D}^2 e^i   = \left(\mathbb{I}_d - \frac{1}{\Lambda^i} \text{cof}(\bm{A}^i) \right) \bm{:} \mathrm{D}^2 e^i . 
\end{equation}
Since $\bm{A}^i$ is symmetric, using the spectral decomposition, we diagonalise $\bm{A}^i$ as 
\begin{equation}\label{eq:orthog_c}
  \text{cof}(\bm{A}^i) = \bm{Q}^i_A\,\bm{D}^i_A\,(\bm{Q}^i_A)^\mathrm{T},
\end{equation}
for some orthogonal matrix $\bm{Q}^i_A \in \R^{d\times d}$ and a diagonal matrix $\bm{D}^i_A \in \R^{d\times d}$, where the diagonal elements correspond to the eigenvalues of $\bm{A}^i$.  Let $\a_1^i \leq \ldots \leq \a_d^i$ denote the eigenvalues of $\text{cof}(\bm{A}^i)$. Then $(\bm{D}^i_A)_{kl}=\a_k^i \d_{kl}$, where $\d_{kl}$ is the Kronecker delta, and 
\begin{equation}\label{eq:aij_bounds}
    \xi\leq \a_j^i \leq \left(\l_\Maxi^i\right)^{d-1}
\end{equation}
from \eqref{eq:cof_ineq} with $\xi$ defined after \eqref{eq:cof_ineq}.
Since $\bm{Q}^i_A$ is orthogonal, we have $(\bm{Q}^i_A)^{-1} = (\bm{Q}^i_A)^{T}$. Substituting \eqref{eq:orthog_c} in \eqref{eq:err_step2} leads to 
\begin{equation*}
  \D e^{i+1}  -\frac{e_f^i}{\Lambda^i} =\left (\mathbb{I}_d -  \frac{1}{\Lambda^i}\bm{Q}^i_A \bm{D}^i_A (\bm{Q}^i_A)^\mathrm{T}\right) \bm{:} \mathrm{D}^2 e^i   = \left( \bm{Q}^i_A\left(\mathbb{I}_d -  \frac{1}{\Lambda^i}\bm{D}^i_A\right) (\bm{Q}^i_A)^\mathrm{T}\right) \bm{:} \mathrm{D}^{2}e^i .
\end{equation*}
Using the definition of the Frobenius product and the symmetry property of the trace, the equation above can be written after defining $\bm{B}^i:= (\bm{Q}^i_A)^{\mathrm{T}}(\mathrm{D}^2e^i)\bm{Q}^i_A $ as
\begin{align}
  \D e^{i+1}  -\frac{e_f^i}{\Lambda^i} &= \tr( \bm{Q}^i_A\left(\mathbb{I}_d - \frac{1}{\Lambda^i}\bm{D}^i_A\right) (\bm{Q}^i_A)^\mathrm{T} (\mathrm{D}^{2}e^i))\nonumber\\ &= \tr( \left(\mathbb{I}_d - \frac{1}{\Lambda^i}\bm{D}^i_A\right) (\bm{Q}^i_A)^\mathrm{T} (\mathrm{D}^{2}e^i)\bm{Q}^i_A)\nonumber\\
  &= \sum_{j,k=1}^d \left(\mathbb{I}_d-\frac{1}{\Lambda^i}\bm{D}_A^i\right)_{jk}(\bm{B}^i)_{jk}   = \sum_{j = 1}^d \left(1 - \frac{\a_j^i}{\Lambda^i}\right)(\bm{B}^i)_{jj}.\label{eq:err_step3}
\end{align}
The matrix $\bm{B}^i$ is positive semi-definite as $\mathrm{D}^2 e^i$ is positive semi-definite since
\begin{align*}
    \bm{y}^\mathrm{T} (\bm{Q}^i_A)^{\mathrm{T}}(\mathrm{D}^2e^i)\bm{Q}^i_A \bm{y}=  (\bm{Q}^i_A\bm{y})^{\mathrm{T}}(\mathrm{D}^2e^i)(\bm{Q}^i_A \bm{y})\geq 0, \quad \forall\, \bm{y}\in \R^d.
\end{align*} 
In particular, inserting $\bm{y}=\hat{\bm{e}}_j$ ($j^\mathrm{th}$ unit vector) we get $(\bm{B}^i)_{jj} \geq 0$ for $j = 1,2,\ldots,d$. 
Since $\Lambda^i \geq \|\l_\Maxi^i\|_{L^\infty(\Om)}^{d-1}$, we find the following bounds using \eqref{eq:aij_bounds},
\begin{equation}\label{eq:err_bounds}
  0\leq  1- \frac{\left(\l_{\Maxi}^i\right)^{d-1}}{\Lambda^i} \leq 1- \frac{\a_j^i}{\Lambda^i} \leq 1- \frac{\xi}{\Lambda^i}.
\end{equation}
This gives,
\begin{align}\label{eq:upper_lower}
  \D e^{i+1}  -\frac{e_f^i}{\Lambda^i} \begin{cases}
      &\leq \left(1- \dfrac{\xi}{\Lambda^i}\right) \sum_{j=1}^d (\bm{B}^i)_{jj}=\left(1- \dfrac{\xi}{\Lambda^i}\right)  \mathrm{tr}(\bm{B}^i)\\[.5em]
      &\geq \left(1- \dfrac{\left(\l_{\Maxi}^i\right)^{d-1}}{\Lambda^i}\right) \sum_{j=1}^d (\bm{B}^)i_{jj}=\left(1- \dfrac{\left(\l_{\Maxi}^i\right)^{d-1}}{\Lambda^i}\right)\mathrm{tr}(\bm{B}^i).
  \end{cases}
\end{align}
Using the definition of $\bm{B}^i$, the symmetric property of the trace and orthogonality of $\bm{Q}^i_A$, we find
\begin{equation*}
  \tr(\bm{B}^i) = \tr\left((\bm{Q}^i_A)^\mathrm{T}(\mathrm{D}^2e^i)\bm{Q}^i_A\right) = \tr (\mathrm{D}^2e^i) = \D e^i\geq 0.
\end{equation*}
Inserting in \eqref{eq:upper_lower} we prove \eqref{eq:to_proof_thm}.
\end{proof}

\begin{proof}[\textbf{Proof of \Cref{thm:convergence}}]
\textbf{(Step 1) Proving \eqref{eq:contraction_L2}.}
 Using the positivity of terms in \eqref{eq:to_proof_thm}, we use the following inequality
\begin{align*}
    \left\Vert \D e^{i+1} -\frac{e_f^i}{\Lambda^i}  \right\Vert_{L^2(\Om)} &\leq \left\Vert \left(1- \dfrac{\xi}{\Lambda^i}\right)\D e^i \right\Vert_{L^2(\Om)} .
\end{align*}
We can rewrite this equation using \eqref{eq:lipschitz_of_f}, \Cref{lem:lemma_for_proof} and the reverse triangle inequality as
\begin{align*}
     \Vert \D e^{i+1} \Vert_{L^2(\Om)} &\leq  \left\Vert \left(1- \dfrac{\xi}{\Lambda^i}\right)\D e^i \right\Vert_{L^2(\Om)}+ \left\Vert \frac{e_f^i}{\Lambda^i} \right\Vert_{L^2(\Om)}\\
     &\leq  \left\Vert \left(1- \dfrac{\xi}{\Lambda^i}\right)\D e^i \right\Vert_{L^2(\Om)} \overset{\ref{ass:Afa}}+ \frac{\mu_f}{\Lambda^i}\Vert \del e^i\Vert_{L^2(\Om)}\\
    &\leq \left(1- \dfrac{\xi}{\Lambda^i}\right)\left\Vert \D e^i \right\Vert_{L^2(\Om)} \overset{\Cref{lem:lemma_for_proof}}+  \frac{C_E\mu_f}{\Lambda^i}  \Vert \D e^i\Vert_{L^2(\Om)} \\
    &=  \left(1 - \dfrac{\xi}{\Lambda^i} + \frac{C_E \mu_f}{\Lambda^i}\right) \Vert \D e^i\Vert_{L^2(\Om)}, 
\end{align*}
where $C_E \geq 1$ is a constant independent of $u^i$.
The contraction rate becomes
\begin{align*}
   &q_i:= 1 - \dfrac{\xi}{\Lambda^i} + \frac{ C_E\mu_f}{\Lambda^i}< 1, 
\end{align*}
provided $\mu_f< \xi/C_E=:\bar{\xi}_{f,u}$. Observe that $\bar{\xi}_{f,u}$ depends on $f$ and $u$, and is bounded from below for $\mu_f\searrow 0$ since both $f$ is bounded from below, and $\mathrm{D}^2 u$ has uniformly strictly positive and bounded eigenvalues in this limit. Thus, there exists $\mu_f^*>0$, dependent on $\Om$ and $f(\bm{\cdot},\bm{0})$, such that for $\mu_f<\mu_f^*$, we have $q_i<1$. 


\textbf{(Step 2) Proving (i).}  Recall that $\mu_f=0$ for this part.
Then, as a result of \eqref{eq:to_proof_thm}, we have $\D e^{i+1} < \D e^{i}$ for all $ i \geq 0$. By an inductive argument, we find the decreasing sequence   $0 \leq \D e^{i+1} < \D e^i < \ldots < \D e^0$.
Using $u^i = u+ e^i$ for all $i \geq 0$, we find another decreasing sequence 
\begin{equation*}
  0 < \D u \leq \D u^{i+1} < \D u^i < \ldots < \D u^0.
\end{equation*}
Using $\D u^i = \tr(\mathrm{D}^2u^i)=\sum_{j=1}^d \l^i_j$, we express
\begin{equation}\label{eq:lamb_id}
\sum_{j=1}^d \l^i_j\leq \D u^0 \;\;\text{ where }\;\; \l^i_j\geq \l_\Mini \;\text{ for all } j=1,\dots,d,
\end{equation}
due to \eqref{eq:D2u_bounds}. Then we have
\begin{equation}
    \l_\Maxi^i\leq \del u^0 -(d-1)\l_\Mini \text{ almost everywhere in } \Om.
\end{equation}
Thus, setting $\Lambda^i=\|\D u^0 -(d-1)\l_\Mini\|^{d-1}_{L^\infty(\Om)}$, we guarantee that $\Lambda^i\geq \|\l_\Maxi^i\|_{L^\infty(\Om)}^{d-1}$ for all $i\in\N$.

\textbf{(Step 3) Proving (ii).} 
By the assumption in (ii), there exists a $\bar{\Lambda} > 0$ such that $\Lambda^i < \bar{\Lambda} $ for all $i \in \N$. This implies due to \eqref{eq:lumbed_constant} that $\|\l^i_\Maxi\|_{L^\infty(\Om)}\leq \bar{\Lambda}^{\frac{1}{d-1}}$.
Then, by \eqref{eq:to_proof_thm} we have almost everywhere in $\Om$,
\begin{equation*}
  0\leq \D e^{i+1} \leq \left(1 - \frac{f_{\Mini}}{\l^i_\Maxi\Lambda^i}\right) \D e^i \leq \left(1 - \frac{f_{\Mini}}{\bar{\Lambda}^{\frac{d}{d-1}}}\right) \D e^i =: q\D e^i.
\end{equation*}
This means $\|\D e^{i+1}\|_{L^p(\Om)}\leq q \|\D e^{i}\|_{L^p(\Om)}$ for any $p\in [1,\infty]$, i.e., this $L^p$-error measure is contractive. The linear convergence with rate $q<1$ in $H^2(\Om)$ is special case for $p=2$, since $\|\D (\cdot)\|_{L^2(\Om)}$ is an equivalent norm on $H^2(\Om)\cap H^1_0(\Om)$ by \Cref{lem:lemma_for_proof}. Finally, since $\|\D e^{i}\|_{L^\infty(\Om)}\to 0$, using \cite[Theorem 8.34]{gilbarg1977elliptic} we get that $u^i\to u$ in $C^{1,1}(\bar{\Om})$.
\end{proof}

\begin{remark}[Choice of $\Lambda^i$]
    During the proof, we showed pointwise convergence which holds if $\Lambda^i \geq \|\l_{\Maxi}^i\|^{d-1}_{L^\infty(\Om)}$. While  \eqref{eq:lumbed_constant} is sufficient for this, it is not necessary.
Moreover, we could construct $\Lambda^i$ to be dependent on $\bm{x} \in \Om$. Considering convergence, this analysis is valid as long as $\Lambda^i(\bm{x}) \geq \l_{\Maxi}^i (\bm{x})$ for all $\bm{x} \in \Om$. This is the direction pursued in \cite{MITRA20191722,javed2025robust} and it leads to faster convergence. However, keeping $\Lambda^i$ constant in $\Om$ enables us to implement other acceleration methods, see \Cref{sec:two-strategies}. Nevertheless, $\Lambda^i$ can be chosen adaptively every iteration based on $\l_\Maxi^i$ values to expedite the convergence. This is implemented in \Cref{sec:numRes} inspired by the adaptive linearisation method in \cite{stokke2023adaptive}.
\end{remark}

\subsection{Convergence of generalised solutions}\label{sec:generalised}

\begin{theorem}[Linear convergence in $L^\infty$ to viscosity solutions] 
Let \ref{ass:Afa}--\ref{ass:Aga} hold with $\mu_f=0$, and $\Om$ be a convex domain with a $C^2$-boundary. Let $u\in C(\bar{\Om})$ be a generalised solution of \eqref{eq:main_problem} in terms of \Cref{def:gen_sol} (see \Cref{prop:gen_sol}) such that there exists $\l_{\Mini}>0$ for which 
    \[
    u(\bm{y})-u(\bm{x})\geq \bm{p}\cdot(\bm{y}-\bm{x}) + \frac{1}{2}\l_\Mini |\bm{y}-\bm{x}|^2,
    \]
    for all $\bm{x},\,\bm{y}\in \bar{\Om}$ and $\bm{p}\in \p u(\bm{x})$.  
Let $\{u^{i}\}_{i\in \N}\in C^{1,1}(\bar{\Om})$ denote the L-scheme iterates generated by solving \eqref{eq:L-scheme}. 
 We fix an iteration index $i\in \N$, and let $u^i \in C^{1,1}(\bar{\Om})$ satisfy $u^i=\gamma$ on $\p\Om$, and for a constant $\l_\Maxi^i>0$
     \[
    u^i(\bm{y})-u^i(\bm{x})\leq \nabla u^i(\bm{x})\cdot(\bm{y}-\bm{x}) + \frac{1}{2}\l_\Maxi^i |\bm{y}-\bm{x}|^2,
    \]
    for all $\bm{x},\,\bm{y}\in \bar{\Om}$. 
Choose a lumped constant $\Lambda^i>0$ satisfying
  \begin{equation}\label{eq:lumbed_con}
    \Lambda^i \geq (\l_{\Maxi}^i)^{d-1}.
  \end{equation}
Assume that the error $e^i=u^i-u$ is convex in $\Om$. Then, 
    \begin{align}
        \left( 1-\frac{\l_\Mini^{d-1}}{\Lambda^i}\right) e^{i+1} \leq e^i  \leq  0, \text{ a.e. in } \Om. 
    \end{align}   
Moreover, if $e^i$ are convex for all $i\in \N$, and $\Lambda^i$ satisfying \eqref{eq:lumbed_con} are bounded from above by some $\Tilde{\Lambda} > 0$ then $u^i$ converges to $u$ linearly in $L^\infty(\Om)$ with contraction rate $\bar{q}=1-(\l_\Mini^{d-1}\slash \Tilde{\Lambda})$.
    \label{thm:Linfty}
\end{theorem}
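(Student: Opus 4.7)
The plan is to derive the stated pointwise two-sided bound on $e^{i+1}$ via a weak maximum principle comparison for the Laplace operator, and then to read off linear $L^\infty$ convergence by iteration. First, I would observe that $e^i=e^{i+1}=0$ on $\partial\Om$: the assumption $u^i=\gamma$ on $\partial\Om$ gives $v^{i+1}=\gamma-u^i=0$ there by \eqref{eq:L-scheme}, so $u^{i+1}=u^i+v^{i+1}=\gamma$ on $\partial\Om$ as well. Convexity of $e^i$ on the convex domain $\Om$, combined with these vanishing boundary values, immediately gives $e^i\leq 0$ a.e.\ in $\Om$, since a convex function attains its maximum on the boundary.

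For the bound relating $e^{i+1}$ to $e^i$ I would invoke \Cref{lem:Classical}. With $\mu_f=0$ one has $e_f^i=0$, and $\max\{f_\Mini/\l_\Maxi^i,\l_\Mini^{d-1}\}\geq \l_\Mini^{d-1}$, so \eqref{eq:to_proof_thm} simplifies to
\[
0\leq \Delta e^{i+1}\leq c^i\,\Delta e^i \quad\text{a.e.\ in }\Om, \qquad c^i:=1-\tfrac{\l_\Mini^{d-1}}{\Lambda^i},
\]
where $c^i\in[0,1)$ because convexity of $e^i$ forces $\mathrm{D}^2 u^i\succeq \l_\Mini\mathbb{I}_d$, hence $\Lambda^i\geq(\l_\Maxi^i)^{d-1}\geq\l_\Mini^{d-1}$. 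The left inequality $\Delta e^{i+1}\geq 0$ combined with $e^{i+1}|_{\partial\Om}=0$ yields $e^{i+1}\leq 0$ in $\Om$ by the weak maximum principle for sub-harmonic functions. For the companion lower bound $c^i e^i\leq e^{i+1}$, I would introduce the comparison function $z:=e^{i+1}-c^i e^i$; it vanishes on $\partial\Om$ and satisfies $\Delta z\leq 0$ a.e.\ in $\Om$ by the right inequality above, so the weak maximum principle (now applied to the super-harmonic $z$) gives $z\geq 0$ in $\Om$. These two inequalities together are the asserted two-sided bound.

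For the final claim, the bound $c^i e^i\leq e^{i+1}\leq 0$ translates to $|e^{i+1}|\leq c^i|e^i|$ a.e., hence $\|e^{i+1}\|_{L^\infty(\Om)}\leq c^i\|e^i\|_{L^\infty(\Om)}$. The uniform upper bound $\Lambda^i\leq\tilde\Lambda$ makes $c^i\leq\bar q=1-\l_\Mini^{d-1}/\tilde\Lambda<1$, so iterating produces $\|e^i\|_{L^\infty(\Om)}\leq\bar q^i\|e^0\|_{L^\infty(\Om)}\to 0$, which is the claimed linear $L^\infty$ convergence with rate $\bar q$. I expect the main technical obstacle to be justifying that the pointwise-a.e.\ manipulations behind \Cref{lem:Classical} remain valid in the generalised-solution setting, where $u$ is only an Alexandrov solution rather than classical: the uniform-convexity assumption on $u$ makes it $\l_\Mini$-semi-convex, and Alexandrov's theorem then guarantees twice-differentiability a.e.\ with $\det(\mathrm{D}^2 u)=f$ a.e.\ on those points, which (together with $u^i\in C^{1,1}(\bar\Om)$ via Rademacher) is exactly what is needed to run the lemma's Frobenius-product diagonalisation on a common set of full measure.
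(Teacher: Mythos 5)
Your plan reproduces the paper's key algebraic steps (Lemma~\ref{lem:Classical} with $\mu_f=0$, the two maximum-principle comparisons, and the iteration to $L^\infty$-linear convergence), but it takes a genuinely different route: you apply the lemma and the maximum principle \emph{directly} to $e^i = u^i-u$ and $e^{i+1}=u^{i+1}-u$, whereas the paper first mollifies, builds an interior domain $\Om_\e$, defines a regularised iterate $u^{i+1}_\e$ through a separate Poisson problem \eqref{eq:uepsi_update} with carefully chosen boundary data on $\p\Om_\e$, proves the pointwise contraction \eqref{eq:Linfty_contrct_eps} for the \emph{smooth} objects $u^{i+1}_\e-u_\e$ and $u^i_\e-u_\e$, and only then passes $\e\searrow 0$ using weak-$*$ convergence of the Monge--Ampère measures $\mathcal{M}u_\e\rightharpoonup f$ and uniform convergence of the mollifications. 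This regularisation layer is not decorative; it is precisely what makes the maximum-principle step rigorous.

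The gap in your argument is the step ``$\Delta e^{i+1}\geq 0$ a.e.\ together with $e^{i+1}|_{\p\Om}=0$ gives $e^{i+1}\leq 0$ by the weak maximum principle.'' You correctly anticipate there is a technical obstacle, but the resolution you propose — Alexandrov's theorem plus Rademacher — does not close it. Alexandrov's theorem gives the a.e.\ pointwise Hessian $\mathrm{D}^2u$, but this is only the density of the \emph{absolutely continuous part} of the distributional Hessian of a convex function. In general $\Delta u$ as a distribution is a nonnegative \emph{measure} with a possible singular part $\mu_{\rm sing}\geq 0$. Since $u^{i+1}\in C^{1,1}$ has no singular Laplacian, distributionally $\Delta e^{i+1} = (\Delta e^{i+1})_{\rm a.e.}\,\dd x - \mu_{\rm sing}$, and the term $-\mu_{\rm sing}\leq 0$ can destroy subharmonicity at exactly the points your a.e.\ inequality does not see. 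The weak maximum principle needs the distributional sign, not the pointwise-a.e.\ one, so the conclusion $e^{i+1}\leq 0$ is not justified. (Your other comparison $z:=e^{i+1}-c^ie^i$ is actually safer, since both $e^{i+1}$ and $e^i$ carry the same singular contribution $-\mu_{\rm sing}$ and $c^i<1$ gives it the right sign for superharmonicity; but the upper bound is where the argument breaks.) Notice also that you cannot fall back on convexity of $e^{i+1}$ to get $e^{i+1}\leq 0$, since the theorem's first claim only assumes convexity of $e^i$. The paper sidesteps all of this by proving the estimate for $C^\infty$ mollifications — where pointwise and distributional Laplacians coincide — and then transferring it to the limit by uniform convergence.
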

\noindent
The proof of the theorem above is complicated by the fact that $\mathrm{D}^2 u$ might not be well-defined for generalised solutions. Thus, the guaranteed contraction rate is also different from \Cref{thm:convergence}. We prove the theorem by passing to the limit of regularised solutions.

\begin{proof} \textbf{(Step 1) The domain $\Om_\e$ and its properties.}
    Since the domain $\Om$ is convex with $C^2$ boundaries, there exists a convex function $G_\Om\in C^{1,1}(\bar{\Om}) $ such that $G_\Om|_{\p\Om}=0$, i.e., $\p\Om$ is the 0 level-set of $G_\Om$. Such a function could simply be the solution of a Monge-Ampère equation with homogeneous  Dirichlet condition.  Observe that since $G_\Om$ is convex, $G_\Om<0$ in $\Om$.   For $\e>0$, we define the set
    \begin{align*}
        \Om_\e:= \{\bm{x}\in \Om \;\;|\;\; G_\Om(\bm{x})<-\e\}.
    \end{align*}
    Due to the convexity of $G_\Om$, the set $\Om_\e$ is convex. Since multiplying $G_\Om$ by a positive constant preserves its 0 level-set and convexity, we can assume that $G_\Om$ has a Lipschitz constant of 1. This gives,
    \begin{align*}
        \mathrm{dist}(\p\Om,\Om_\e)>\e \text{ for } \e=|G_\Om(\bm{x})-G_\Om(\bm{y})|\leq |\bm{x}-\bm{y}| \text{ for all } \bm{x}\in \p\Om \text{ and } \bm{y}\in \p\Om_\e.
    \end{align*}

    \textbf{(Step 2) The functions $u_\e,\, u^i_\e$ and their properties.} Let $\eta_\e$ be the standard mollifying function defined, e.g.,  in Appendix C of \cite{evans2022partial}. We introduce approximating functions $u_\e,\, u^{i}_\e:\Om_\e\to \R$ defined as
\begin{equation}
    u_\e:= u\ast \eta_\e= \int_{\R^d}  u(\bm{x} + \bm{h}) \,\eta_\varepsilon (\bm{h}) \,\mathrm{d}\bm{h},  u_\e^i:= u^i\ast \eta_\e= \int_{\R^d}  u^i(\bm{x} + \bm{h}) \,\eta_\varepsilon (\bm{h})\, \mathrm{d}\bm{h}.
\end{equation}
 Due to $\mathrm{dist}(\p\Om,\Om_\e)>\e$, these functions are well-defined.   First, we show that $u_\varepsilon$ is convex in $\Om_\varepsilon$.
For all $\bm{x},\bm{y}\in\Om_\varepsilon$ we have
\begin{align*}
    u_\varepsilon (t \bm{x} + (1-t)\bm{y}) &= \int_{\R^d} u((t \bm{x} + (1-t)\bm{y}) +\bm{h})\,\eta_\varepsilon (\bm{h})\, \mathrm{d}\bm{h} \\ 
    &=  \int_{\R^d} u(t (\bm{x}+\bm{h}) + (1-t)(\bm{y}+\bm{h}))\,\eta_\varepsilon (\bm{h})\, \mathrm{d}\bm{h} \\
    &\leq t \int_{\R^d} u(\bm{x}+\bm{h}) \,\eta_\varepsilon (\bm{h}) \,\mathrm{d}\bm{h} + (1-t) \int_{\R^d} u(\bm{y}+\bm{h}) \,\eta_\varepsilon (\bm{h})\, \mathrm{d}\bm{h} \\
    &= t u_\varepsilon (\bm{x}) + (1- t) u_\varepsilon (\bm{y}).
\end{align*}
Similarly $u_\varepsilon^i$ is also convex.
Moreover, since $(u^i -u)$ is convex, so is ($u^i-u)_\varepsilon = u_\varepsilon^i - u_\varepsilon$.
Thus, $\mathrm{D}^2 u^i_\e\succeq \mathrm{D}^2 u_\e$. Now, since $u$ is convex in $\Omega$, it is Lipschitz \cite{nguyen2021lipschitz}, which implies by Rademacher's theorem that it is almost everywhere differentiable. Thus, for almost all $\x\in \Om_\e$, and $|\d \bm{x}|<\e$
\begin{equation*}
    u(\bm{x} + \d \bm{x}) \geq  u(\bm{x}) + \del u(\bm{x})\cdot \d \bm{x} + \frac{1}{2} \l_\Mini \vert \d \bm{x} \vert ^2 .
\end{equation*}
Multiplying with $\eta_\varepsilon$ and integrating 
\begin{align*}
    u_\varepsilon(\bm{x} + \d \bm{x}) &= \int_{\R^d} u(\bm{x} +\d\bm{x}+\bm{h})\,\eta_\varepsilon (\bm{h}) \, \mathrm{d} h \\
    &\geq \int_{\R^d} u(\bm{x}+\bm{h}) \,\eta_\varepsilon (\bm{h})\, \mathrm{d}\bm{h} + \d\bm{x} \bm{\cdot} \int_{\R^d} \nabla u(\bm{x}+\bm{h}) \eta_\varepsilon (\bm{h}) \mathrm{d}\bm{h} + \frac{\l_\Mini}{2} \vert \d \bm{x} \vert^2 \int_{\R^d} \eta_\varepsilon (\bm{h}) \mathrm{d}\bm{h} \\
    &= u_\varepsilon(\bm{x}) + \d \bm{x} \bm{\cdot} \del u_\varepsilon (\bm{x}) + \frac{\l_\Mini}{2} \vert \d \bm{x} \vert^2 .
\end{align*}
Since $u_\varepsilon \in C^\infty(\Om)$ it implies $\mathrm{D}^2u_\varepsilon \succeq \l_\Mini \mathbb{I}_d$ by using the Taylor-series and passing $|\d\bm{x}|\to 0$.

Rademacher's theorem also implies that $\mathrm{D}^2u^i$ exists almost everywhere since $u^i\in C^{1,1}(\bar{\Om})$. Thus, similar to before, using
\begin{equation*}
    u^{i}(\bm{x} +\d \bm{x}) \leq u^i(\bm{x}) + \d \bm{x} \cdot \del u^i (\bm{x}) + \frac{\l_\Maxi^i}{2} \vert \d \bm{x} \vert^2,
\end{equation*}
we have $  \mathrm{D}^2 u_\varepsilon^i \preceq \l_\Maxi^i \mathbb{I}_d$.
Combining these estimates, we have the ordering 
\begin{equation*}
    \l_\Mini \mathbb{I}_d \preceq \mathrm{D}^2u_\varepsilon \preceq \mathrm{D}^2u_\varepsilon^i \preceq \l_\Maxi^i \mathbb{I}_d \preceq (\Lambda^i)^{\frac{1}{d-1}} \mathbb{I}_d.
\end{equation*}

\textbf{(Step 3) The updated $u^{i+1}_\e$ and its properties.}
Now, define 
\begin{equation}
f_\varepsilon := \det(\mathrm{D}^2u_\varepsilon)= \mathcal{M} u_\e, \text{ with $\mathcal{M}$ as the Monge-measure in \eqref{eq:monge-measure}.}
\end{equation}
 
With these definitions, update $u^{i+1}_\e\in H^2(\Om)\cap H^1_0(\Om)$ by solving
\begin{equation}\label{eq:uepsi_update}
\left\{
\begin{aligned}
\D (u_\varepsilon^{i+1} -u_\varepsilon^i) &= f_\varepsilon - \det(\mathrm{D}^2 u_\varepsilon^i)  &\text{in } \Om_\varepsilon, \\
u_\varepsilon^{i+1} &= u_\varepsilon + \left( 1- \frac{\l_\Mini^{d-1}}{\Lambda^i}\right)(u_\varepsilon^i - u_\varepsilon)  &\text{on } \partial \Om_\varepsilon .
\end{aligned}
\right.
\end{equation}
 Defining $\bm{A}^i_\e:= t \mathrm{D}^2 u^i_\e +(1-t)\mathrm{D}^2 u^i$ for some $t\in [0,1]$ and observing that for all $\e>0$,
\[
\l_\Mini^{d-1} \mathbb{I}_d \preceq \mathrm{cof}(\bm{A}^i_\e)\preceq (\l_\Maxi^i)^{d-1} \mathbb{I}_d \]
similar to \eqref{eq:cof_ineq},  we repeat the analysis of  \Cref{lem:Classical} to get
    \begin{equation}\label{eq:contraction_eps}
        0\leq \D (u_\varepsilon^{i+1} -u_\varepsilon) \leq \left(  1- \frac{\l_\Mini^{d-1}}{\Lambda^i}\right) \D (u_\varepsilon^i - u_\varepsilon)  \quad \text{in } \Om_\varepsilon.
    \end{equation}
This gives by defining 
    \begin{equation*}
     \Upsilon_\e^i:= (u_\varepsilon^{i+1} -u_\varepsilon) -  \left(  1- \frac{\l_\Mini^{d-1}}{\Lambda^i}\right) (u_\varepsilon^i - u_\varepsilon) \text{ that }  \begin{cases} \D \Upsilon_\e^i \leq 0 &\text{ in } \Om_\e,\\
     \Upsilon_\e^i=0  &\text{ on } \p\Om_\e.
     \end{cases}
    \end{equation*}   
Consequently, by the Maximum principle \cite[Chapter 6.4]{evans2022partial},  $\Upsilon_\e^i\geq 0$ a.e. in $\Om_\e$. 

Moreover, from \eqref{eq:contraction_eps} we get that 
\begin{align*}
\D (u_\varepsilon^{i+1} -u_\varepsilon)\geq 0 \text{ in } \Om_\e \text{ and }    u_\varepsilon^{i+1} - u_\varepsilon =\left(  1- \frac{\l_\Mini^{d-1}}{\Lambda^i}\right) (u_\varepsilon^i - u_\varepsilon)\leq 0 \quad \text{in } \p\Omega_\varepsilon .
\end{align*}
In the last inequality we have used that $u^i_\e\leq u_\e$ in $\Om_\e$ since $u^i\leq u$ in $\Om$. This is because $e^i=u^i-u$ is convex, and vanishing on $\p\Om$, thus, making $e^i\leq 0$ in $\Om$.  Again from the Maximum principle we get that $u_\varepsilon^{i+1} -u_\varepsilon\leq 0$ a.e. in $\Om_\e$. Combining, we get that 
\begin{equation}\label{eq:Linfty_contrct_eps}
   0 \geq  u_\varepsilon^{i+1} - u_\varepsilon \geq \left(  1- \frac{\l_\Mini^{d-1}}{\Lambda^i}\right) (u_\varepsilon^i - u_\varepsilon)\quad \text{in } \Omega_\varepsilon .
\end{equation}

\textbf{(Step 4) Passing the limit $\e\searrow 0$.}
Multiplying the first equation in \eqref{eq:uepsi_update} with $w\in C^\infty(\Om)$ such that support of $w$ is compactly embedded in $\Om$, we get for $\e$ small enough such that $\mathrm{supp} (w)\Subset \Om_\e$ the weak form
\begin{align}
    \int_{\Om} \del (u^{i+1}_\e -u^i_\e)\bm{\cdot} \del w \,\mathrm{d}\bm{x} =\int_{\Om} w\det (\mathrm{D}^2 u^i_\e) \,\mathrm{d}\bm{x} - \int_{\Om} w f_\e \,\\mathrm{d}\bm{x}.
\end{align}
Observe that from Guiterezz \cite[Lemma 1.2.3]{gutierrez2001monge}, that $u_\e\to u$ uniformly,  $f^\varepsilon =\mathcal{M}u_\e \rightharpoonup f$ weakly, i.e., $\int_{\Om} w f_\e \mathrm{d}\bm{x} \to \int_{\Om} \mathrm{d}\bm{x} w f$ as $\e\searrow 0$.
Similarly, since $u^i_\e\to u^i$ uniformly in $C^{1,1}(\Om')$ for all $\Om'\Subset \Om$, we have by passing the limit $\e\searrow 0$ that
\begin{equation}
     u^{i+1}_\varepsilon \to u^{i+1}.
\end{equation}
Sending $\varepsilon \searrow 0$ in \eqref{eq:Linfty_contrct_eps}, we finally get 
\begin{equation*}
    0\geq u^{i+1} - u \geq \left(1 - \frac{\l_\Mini^{d-1}}{\Lambda^i}\right) (u^i - u).
\end{equation*} 
\end{proof}

\section{Two fast solution strategies}\label{sec:two-strategies}
For simplicity in introducing the solvers, we consider $\Om = (0,1)^2$ as the domain, implying that $d=2$. This will also be used in the numerical experiments in \Cref{sec:numRes}. In the following, a finite difference discretisation method will be examined. The domain $\Om = (0,1)^2$ is discretised as follows.
For $N \in \N$, we introduce a uniform grid size $\D x := \frac{1}{N + 1}>0$.
Then, the $(N+2)\times (N+2)$ computational grid is defined as 
\begin{equation*}
  \bm{x}_{j,k} := (x_j, y_k), \quad x_j := j \D x, \quad y_k := k\D x, \quad j,k = 0,1,\ldots,N+1.
\end{equation*}
The discretised solution on the interior grid is denoted by $u_{j,k} \approx  u(\bm{x}_{j,k})$ for $j,k = 1,2, \ldots, N$. 
The approximations are stored in lexicographical order as 
\begin{equation}
  \underline{u} := (u_{1,1},\ldots,u_{N,1},\ldots,u_{1,N},\ldots,u_{N,N})^\mathrm{T} .
\end{equation}
All underlined vectors in this context have the dimension $\R^{N^2}$. In a similar manner, $f$ is discretised on the interior grid.

Using central differences, the second-order partial derivatives of $u$ are approximated at $\bm{x}_{jk}$ for $j,k = 2,\ldots , N-1$ as
\begin{subequations}
\begin{align}\label{eq:approx_second}
  u_{xx}(\bm{x}_{j,k}) &\approx \frac{1}{\D x^2}(u_{j-1,k}-2u_{j,k}+u_{j+1,k}), \\
  u_{xy}(\bm{x}_{j,k}) &\approx \frac{1}{ 4\D x^2}(u_{j-1,k-1}-u_{j-1,k+1} - u_{j+1,k-1} + u_{j+1,k+1}), \\
  u_{yy}(\bm{x}_{j,k}) &\approx \frac{1}{\D x^2}(u_{j,k-1}-2u_{j,k}+u_{j,k+1}) .
\end{align}
\end{subequations}
If $ j\in \{1,N\}$ or $k \in \{1,N\}$, the second order partial derivatives can be approximated similarly using the Dirichlet boundary conditions. 

Let $\underline{u}^i$ denote the current iterate for $i\in \N$.
We introduce the vector of the discretised Hessian determinant and residual as
\begin{equation*}
  \underline{f}^i := \underline{u}_{xx}^{i} \odot \underline{u}_{yy}^{i} - \underline{u} _{xy}^{i} \odot  \underline{u} _{xy}^{i}, \;\;\text{ and } \;\; \underline{\rho}^i:= \underline{f}^i-\underline{f},
\end{equation*}
where $\odot$ denotes element-wise multiplication, called Hadamard product.
Similarly, we introduce the vector of the discretised Hessian traces, and the maximum eigenvalues of the Hessian matrices
\begin{equation*}
  \underline{\tau}^{i}  := \underline{u} _{xx}^{i} +\underline{u}_{yy}^{i} \;\;\text{ and } \;\;  \underline{\l}_{\Maxi}^{i} := \frac{1}{2}\left(\underline{\tau}^{i} + \sqrt{\underline{\tau}^{i} \odot \underline{\tau}^{i} -4 \underline{f}^i}\right), 
\end{equation*}
where the root is taken element-wise. Conforming with the convergence criteria in \Cref{thm:convergence,thm:Linfty} we need the lumped constant to satisfy $\Lambda^i \geq \max \underline{\l}_{\Maxi}^{i}$. 
As we only measure $\underline{\l}_{\Maxi}^{i}$ on the computational grid, we introduce a safety parameter $\eta \geq 1$ and select
\begin{equation*}
  \Lambda^i = \eta \Vert \underline{\l}_{\Maxi}^{i}\Vert_\infty .
\end{equation*}

\subsection{Green's Function}
One approach to solve the Poisson equation update in \eqref{eq:L-scheme} is by using Green's representation formula. For a linear homogeneous Dirichlet problem,
\begin{equation}\label{eq:arb_green}
    \left\{
  \begin{aligned}
    \mathcal{L}[u] &= f, &\text{ in } \Om,\\
    u &= 0, &\text{ on } \partial \Om,
  \end{aligned}
  \right.
\end{equation}
where $\mathcal{L}[u]$ is the associated differential operator of a linear second-order elliptic PDE, the solution can be expressed in integral form as
\begin{equation}
    u(\bm{x_0}) = \int_\Om G(\bm{x};\bm{x_0})\, f(\bm{x})\,\mathrm{d} \bm{x}.
\end{equation}
Here, $G(\bm{x};\bm{x_0})$ is the associated Green's function of \eqref{eq:arb_green} that satisfies the homogeneous Dirichlet problem 
\begin{equation*}
    \mathcal{L}[G](\bm{x};\bm{x}_0)=\d (\bm{x}-\bm{x}_0) \qquad\text{ in } \Omega,
\end{equation*}
with $\d$ being the Dirac distribution, see Chapter 2.2 of \cite{evans2022partial}.
Then, for the Poisson updates of \eqref{eq:poisson_iter} the explicit representation becomes
\begin{equation}\label{eq:green_int}
  v^{i+1}(\bm{x}_0) = -\frac{1}{\Lambda^i} \int_\Om G_{\rm P}(\bm{x};\bm{x}_0)\, \rho(u^{i}(\bm{x}))\, \mathrm{d}\bm{x},
\end{equation}
where $G_{\rm P}(\bm{x};\bm{x}_0)$ is the Green's function of the Poisson equation with homogeneous Dirichlet boundary condition which is explicitly computable \cite{zauderer2011partial}. More precisely,
\begin{equation}
  G_{\rm P}(\bm{x},(x_0,y_0)) = 4 \sum_{m,n=1}^\infty \frac{\sin(m\pi x)\sin(n\pi y)\sin(m\pi x_0)\sin(n\pi y_0)}{\pi^2 (m^2+n^2)}.
\end{equation}
Using a truncated series, we can approximate this Green's function $G_{\rm P,M}$. 
We approximate the integral in \eqref{eq:green_int} at discrete points $\bm{x}_{j,k}$ using the composite trapezoidal rule, which gives the value of the update $v^{i+1}_{j,k}\approx v^{i+1}(\bm{x}_{j,k})$ as
\begin{equation}\label{eq:green_trapez}
  v_{j,k}^{i+1} \approx-\frac{\D x^2}{\Lambda^i} \sum_{m,n =1}^M G_{\rm P,M}(\bm{x}_{m,n};\bm{x}_{j,k})\,\rho(u^i(\bm{x}_{m,n})).
\end{equation}
Then we discretise the Green's function as a matrix $\underline{\bm{G}}_{\rm P,M} \in \R^{N^2 \times N^2}$ defined as 
\begin{equation}\label{eq:GP_matrix}
  \underline{\bm{G}}_{\rm P}:= \D x^2 \begin{pmatrix}
    G_{\rm P,M}(\bm{x}_{1,1};\bm{x}_{1,1}) & \cdots & G_{\rm P,M}(\bm{x}_{N,N};\bm{x}_{1,1}) \\
    \vdots                       & \ddots & \vdots \\
    G_{\rm P,M}(\bm{x}_{1,1};\bm{x}_{N,N}) & \cdots & G_{\rm P,M}(\bm{x}_{N,N};\bm{x}_{N,N})
  \end{pmatrix}.
\end{equation}
Using $\underline{\bm{G}}_{\rm P}$ and \eqref{eq:poisson_iter}, we rewrite \eqref{eq:green_trapez} in the matrix-vector multiplication form
\begin{equation*}
  \underline{v}^{i+1} := -\frac{1}{\Lambda^i} \underline{\bm{G}}_{\rm P} \underline{\rho}^i.
\end{equation*}
The stopping criterion of the fixed-point method is based on the vanishing values of the Poisson updates and can be defined as 
\begin{equation*}
  \Vert \underline{v}^{i+1}\Vert_2 \leq \delta_{\text{tol}},
\end{equation*}
for some tolerance $\delta_{\text{tol}} >0$. Furthermore, a maximum number of iteration steps $i_{\text{max}} \in \N$ and a threshold for the maximal eigenvalue  $\l_\text{thresh}$ are introduced. An overview of this fixed-point method with Green's function evaluations is provided in \Cref{algorithm1}.


\begin{algorithm}[H]
\caption{Fixed-point method with Green's function}
\label{algorithm1}
\textbf{Input:} $\underline{u}^0$, $\underline{f}$, $i_{\text{max}}$, $\delta_{\text{tol}}$, $\eta$, $\lambda_{\text{thresh}}$; \textbf{Precompute:} $\underline{\bm{G}}_{\rm P}$

\begin{algorithmic}[1]
\For{$i = 0$ to $i_{\text{max}}$}
    \State Compute $\;\;\underline{\tau}^{i} \leftarrow \underline{u}_{xx}^{i} +\underline{u}_{yy}^{i}, \quad$ $\underline{f}^{i} \leftarrow \underline{u}_{xx}^{i} \odot \underline{u}_{yy}^{i}  - \underline{u}_{xy}^{i} \odot \underline{u}_{xy}^{i},  \quad $ $\underline{\rho}^i \leftarrow  \underline{f}^i-\underline{f}$\vspace{.5em}
    \State Compute $\;\;\underline{\l}_{\Maxi}^i \leftarrow \frac{1}{2}(\underline{\tau}^i + \sqrt{\underline{\tau}^i \odot \underline{\tau}^i - 4 \underline{f}^i}),\;\;$ select $\;\; \Lambda^{i} \leftarrow \min(\eta \Vert \underline{\l}_{\Maxi}^{i} \Vert_\infty, \l_{\rm thresh})$\vspace{0.5em}
    \State Compute $ \underline{v}^{i+1}  \leftarrow -\frac{1}{\Lambda^{i}} \underline{\bm{G}}_{\rm P}\underline{\rho}^i,\;\;$ update $\;\; \underline{u}^{i+1} \leftarrow \underline{u}^{i}  + \underline{v}^{i+1} $ \vspace{0.5em}
    \If{$\left\| \underline{v}^{i+1} \right\|_2 \leq \delta_{\text{tol}}$}
        \State break
    \EndIf
\EndFor
\end{algorithmic}
\end{algorithm}

\subsection{Finite difference discretisation for the Laplacian}
An alternative approach to solving Poisson updates involves central finite differences to approximate discretized Poisson updates $v_{j,k}^{i+1} \approx v^{i+1}(\bm{x}_{j,k})$ for $j,k = 1, \ldots, N$. 
The discretisation and approximation are done similarly to the discretisation of the derivatives of the Hessian matrix. As a result, the finite difference approximation for the update becomes
\begin{equation}\label{eq:discr_poisson}
\frac{1}{\D x^2} (v_{j+1,k}^{i+1} + v_{j,k+1}^{i+1} + v_{j-1,k}^{i+1} + v_{j,k-1}^{i+1} - 4v_{j,k}^{i+1})   = -\frac{1}{\Lambda^i} \r(u^i(\bm{x}_{j,k})).
\end{equation}
This can be written in matrix-vector multiplication format as $\underline{\bm{A}}\underline{v}^{i+1} = -\frac{1}{\Lambda^i}\underline{\r}^i$, where the matrix $\underline{\bm{A}}$ corresponds to the discretisation of the Laplacian operator.
In this approach, the computation of $v^{i+1}$ in \Cref{algorithm1}, is modified: rather than evaluating the expression $-\frac{1}{\Lambda^i} \underline{\bm{G}}_{\rm P}\underline{\r}^i $, we solve the linear systems $\underline{\bm{A}}\underline{v}^{i+1} =-\frac{1}{\Lambda^i}\underline{\r}^i$ which is summarized in \Cref{algorithm2}. The matrix $\underline{\bm{A}}$ is sparse and stays the same at every iteration, and thus can be assembled once. The assembly of the matrix $\underline{\bm{A}}$ is significantly faster because it does not depend on the computation of the Green's functions, and it is a sparse matrix unlike $\underline{\bm{G}}_{\rm P}$. 
Moreover, because $\underline{\bm{A}}$ is sparse and symmetric positive definite, the system $\underline{\bm{A}}\underline{v}^{i+1} =-\frac{1}{\Lambda^i}\underline{\r}^i$ can be solved efficiently using the preconditioned conjugate gradient method.
The preconditioned system of equations is denoted by 
\begin{equation*}
    \underline{\bm{P}}^{-1} \underline{\bm{A}} \underline{v}^{i+1} = -\frac{1}{\Lambda^i}\underline{\bm{P}}^{-1} \underline{\r}^i,
\end{equation*}
where $\underline{\bm{P}}$ is a symmetric positive definite matrix \cite{demmel1997applied} which can be computed once, and applied at every iteration to speed up the solution process significantly. 

In \Cref{sec:numRes}, we will compare different types of preconditioners.
The first approach, employs an incomplete LU preconditioner (PCG: LU), as described in e.g. \cite{KAASSCHIETER1988265}.
An alternative preconditioning strategy involves using an algebraic multigrid (AMG) solver. Preconditioning with AMG will be referred to as PCG: AMG. Specifically, in the \texttt{pyamg} framework, we configure a V-cycle with a smoothed aggregation solver, as described by \cite{vanek1996algebraic,van2001convergence}. For comparison, we also solve the nonlinear system of equations using the AMG solver, applying the same settings as those used for preconditioning.
\begin{algorithm}[H]
  \caption{Fixed-point method with finite difference method \& preconditioning.}\label{algorithm2}
  \textbf{Input:} $\underline{u}^0,\underline{f},i_{\text{max}}, \delta_{\text{tol}},\eta, \l_{\rm thresh}$; \textbf{Assemble:} $\underline{\bm{A}}$; \\ \textbf{Precompute:} $\underline{\bm{P}}$
\begin{algorithmic}[1]
  \For{$i = 1 : i_{\text{max}}$}\vspace{.5em}
 \State Compute $\;\;\underline{\tau}^{i} \leftarrow \underline{u}_{xx}^{i} +\underline{u}_{yy}^{i}, \quad$ $\underline{f}^{i} \leftarrow \underline{u}_{xx}^{i} \odot \underline{u}_{yy}^{i}  - \underline{u}_{xy}^{i} \odot \underline{u}_{xy}^{i},  \quad $ $\underline{\rho}^i \leftarrow  \underline{f}^i-\underline{f}$\vspace{.5em}
    \State Compute $\;\;\underline{\l}_{\Maxi}^i \leftarrow \frac{1}{2}(\underline{\tau}^i + \sqrt{\underline{\tau}^i \odot \underline{\tau}^i - 4 \underline{f}^i}),\;\;$ select $\;\; \Lambda^{i} \leftarrow \min(\eta \Vert \underline{\l}_{\Maxi}^{i} \Vert_\infty, \l_{\rm thresh})$\vspace{0.5em}
    \State $ \underline{v}^{i+1} \leftarrow \texttt{solve }  \underline{\bm{P}}^{-1} \underline{\bm{A}} \underline{v} = -\frac{1}{\Lambda^i}\underline{\bm{P}}^{-1} \underline{\rho}^i,\;\;$ \text{ update } $\;\; \underline{u}^{i+1}\leftarrow \underline{u}^i + \underline{v}^{i+1}$\vspace{.5em}
    \If{$\Vert \underline{v}^{i+1} \Vert_2 \leq \delta_{\text{tol}}$ }
            \State break
    \EndIf
  \EndFor
\end{algorithmic}
\end{algorithm}

For Newton iteration, a finite difference discretisation of the Jacobian is computed similar to the previous case. However, the matrix changes every iteration and might become ill-conditioned or even singular.

\section{Numerical results}\label{sec:numRes}
To evaluate the performance of the iterative schemes in \Cref{sec:iterarive} and the solution strategies in \Cref{sec:two-strategies}, we investigate several test cases.

\subsection{Test cases and validation}
\subsubsection{Gaussian solution ($\mu_f\not=0$)} 
To demonstrate convergence of the solver to the correct solution, we selected a two-dimensional Gaussian test case on the unit square $\Om = (0,1)^2$, for which the solution is given by
\begin{equation}\label{eq:ex_sol_1}
  u_{\text{ex}}(\bm{x}) = u_\text{gauss}(\bm{x}):= -\exp\left(-\frac{\Vert \bm{x} - \bm{\mu}\Vert^2_2}{2\sigma^2}\right),
\end{equation}
where $\sigma^2>0$ represents the standard deviation, and $\bm{\mu}\in \R^2$ is the centre of the distribution. 
We are interested in the Gaussian curvature problem \eqref{eq:guass_curvature} for which in fact $f(\bm{x},\bm{y})$ is non-Lipschitz with respect to $\bm{y}$. Thus, we define the function
\begin{equation}\label{eq:for_newton}
  f_\mathrm{g}(\bm{x}): =\det(\mathrm{D}^2 u_\text{gauss}(\bm{x})) =\left(1 - \frac{\Vert \bm{x} - \bm{\mu}\Vert^2_2 }{\sigma^2}\right)\frac{u_{\text{g}}(\bm{x})^2}{\sigma^4} ,
\end{equation}
where the right hand side function in \eqref{eq:guass_curvature} becomes
\begin{equation*}
    f(\bm{x}) = f_{\rm g}(\bm{x}) \left(1 + \left( \frac{(\bm{x} - \bm{\mu})}{\sigma^2}u_{\text{g}}\right)^2 \right)^2,
\end{equation*}
observing that $\nabla u_{\rm gauss}(\bm{x})= (\bm{x}-\bm{\mu}) u_{\rm gauss}/\sigma^2$.
To ensure that the Monge-Ampère equation is elliptic,  $f_\mathrm{g}\geq 0$ needs to be satisfied. 
This holds, if and only if $\Vert x - \mu\Vert_2 \leq \sigma$, for all $x \in \Om$ which puts constraints on $\mu$, $\sigma$. The specific parameters for our test problem are shown in \Cref{fig:gauss}. We will test two different initial guesses ($C_1$, $C_2>0$)
\begin{subequations}
\begin{align}
  & u_1^0(x,y) := u_{\text{ex}}(x,y) - C_1 x(1-x)y(1-y), \label{eq:initial_condition1} \\
& u_2^0(x,y) := u_{\text{ex}}(x,y) + C_2 x(1-x)y(1-y). \label{eq:initial_condition2}  
\end{align}
\end{subequations}

\begin{figure}[h!]
  \centering
  \begin{subfigure}[c]{0.42\textwidth}
      \centering
      \includegraphics[width=\linewidth]{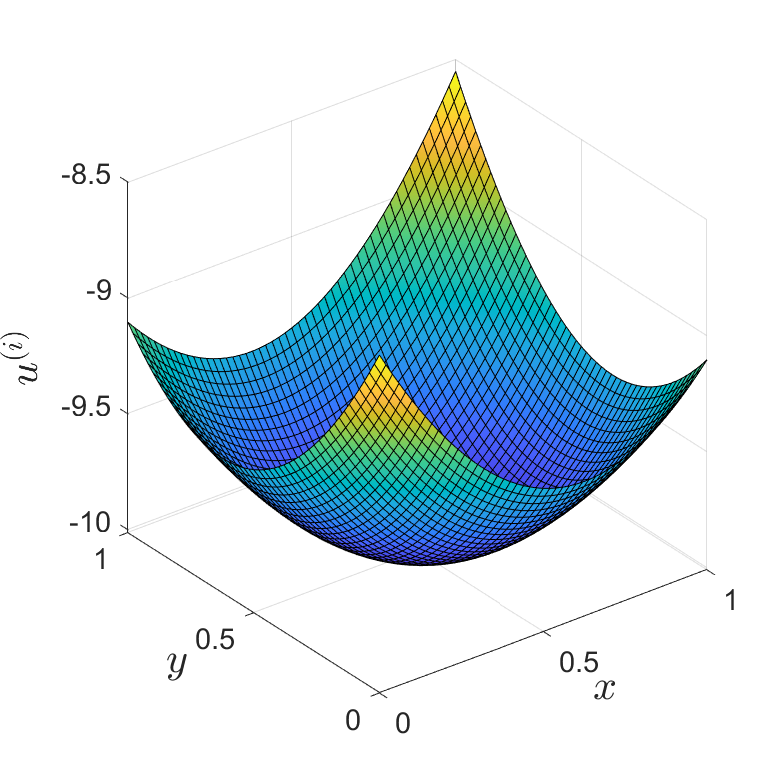}
      \caption{}
  \end{subfigure}
  \hfill
  \begin{subfigure}[c]{0.42\textwidth}
      \centering
      \includegraphics[width=\linewidth]{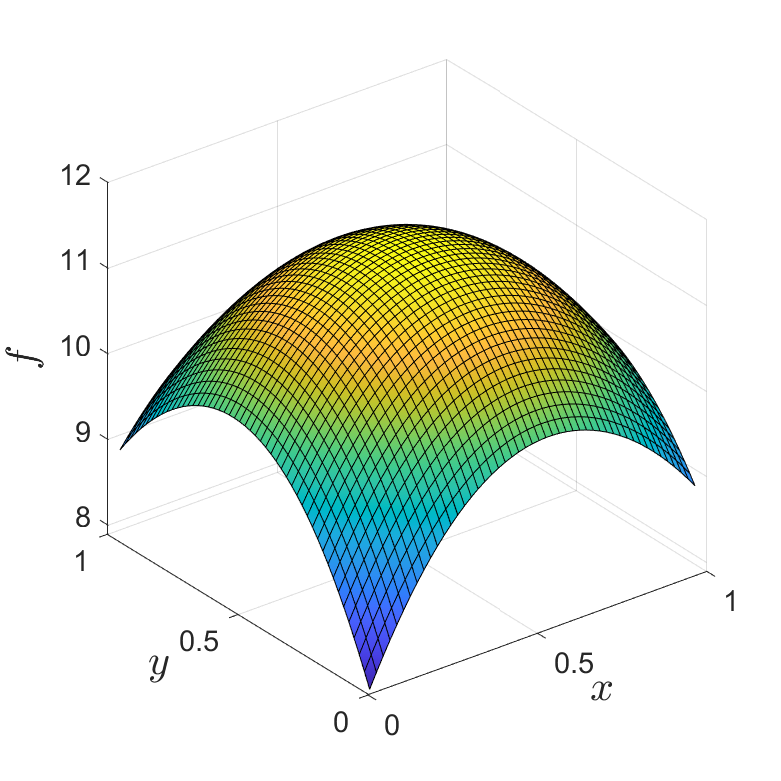}
      \caption{}
  \end{subfigure}
  \caption{The exact solution (a) and right-hand side $f(x)$ (b) for the Gaussian function for $\sigma = 1$ and $\mu=(0.5,0.5)$.}\label{fig:gauss}
\end{figure} 

In our experiments, we choose the constants $C_1 = 30$ and $C_2 = 10$. It is important to note that the initial $u_1^0$ is convex, whereas $u_2^0$ has a saddle shape (see \Cref{fig:saddle_res} (a)) which extends beyond our assumptions in \Cref{thm:convergence,thm:Linfty} of the convexity of iterates.

\subsubsection{Rapidly oscillating solution ($\mu_f=0$)} 
Consider the Monge-Ampère equation \eqref{eq:main_problem} with $\mu_f=0$ for this case. We choose the exact solution to be the Gaussian function given in equation \eqref{eq:ex_sol_1}, perturbed by subtracting a sinus term, resulting in
\begin{equation*}
  u_{\text{ex}}(x,y) = u_{\text{g}}(x,y) - \epsilon_s \sin(l \pi x )\sin(l \pi y ),
\end{equation*}
where $\epsilon_s$ is a constant. To ensure the non-negativity of the right-hand side, it is necessary for $\epsilon_s$ to be sufficiently small.  In this case, we set $\epsilon_s$ to approximately $10^{-3}$. In this example, the right-hand side can become $0$ at several points near the corners. The initial condition is set to $u_1^0$ with the constant $C_1 = 5$.
As shown in \Cref{fig:sinus_comb}, (a) illustrates that $u_\text{ex}$ remains nearly unchanged, while (b) reveals the oscillatory behaviour induced on $f=\det(\mathrm{D}^2 u_{\rm ex})$ by the sinus term. 
\begin{figure}[h!]
  \centering
  \begin{subfigure}[b]{0.44\textwidth}
      \centering
      \includegraphics[width=\linewidth]{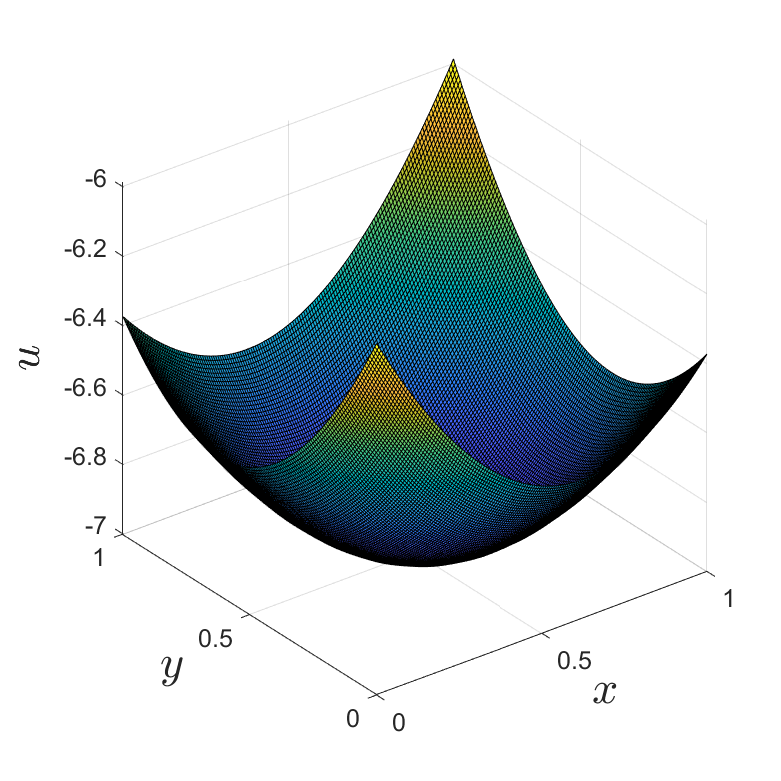}
      \caption{}
  \end{subfigure}
  \hfill
  \begin{subfigure}[b]{0.44\textwidth}
      \centering
      \includegraphics[width=\linewidth]{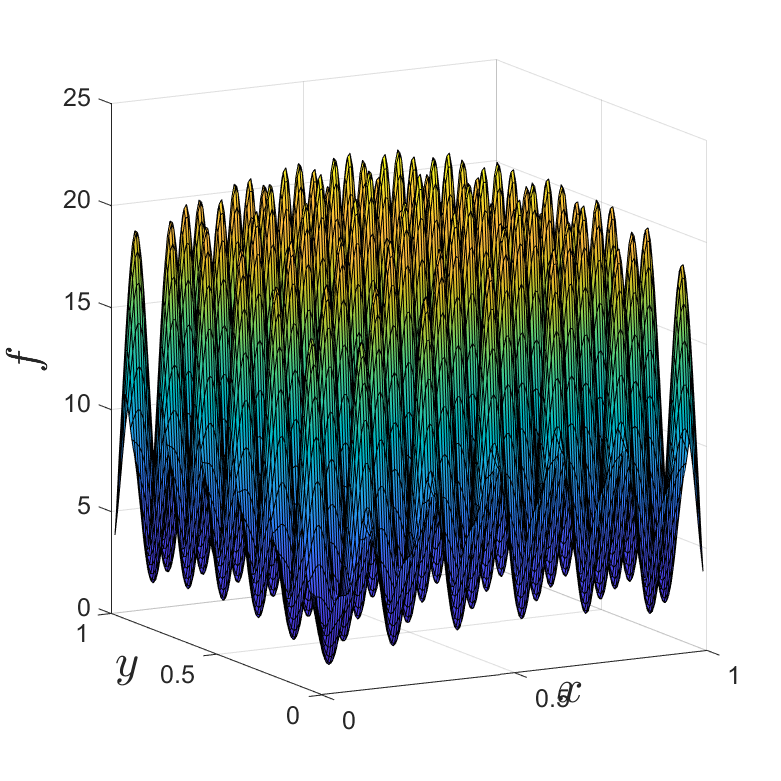}
      \caption{}
  \end{subfigure}
  \caption{The exact solution (a) and right-hand side (b) for the rapidly oscillating solution case. Here, $\sigma = 1$, $\mu=(0.5,0.5)$, and $l=12$.}
  \label{fig:sinus_comb}
\end{figure}

For numerical experiments, the maximum number of iterations $i_\text{max}$ is set to 1500 and the tolerance $\delta_\text{tol}$ is fixed at $10^{-16}$. In the subsequent analysis, we compare several computational results. 
The experiments are conducted using Python. The Python implementation uses the \texttt{scipy} and \texttt{pyamg} \cite{pyamg2023} libraries to solve the linear system of equations.
All computations are performed on an Intel Core i7-12700H under similar operating conditions.

\subsection{Comparison of different implementations of the L-Scheme}\label{subsec:compL}
Comparison of various implementations of the L-Scheme will be conducted with the initial guess $u_1^0$. 
In the context of the Green's function approach, a truncation parameter of $M= 50$ in \eqref{eq:green_trapez} has been selected for the analysis. 
The choice of this parameter represents a balance between computational efficiency and accuracy. 
A smaller truncation parameter typically results in faster convergence, as fewer terms are included in the series expansion, thereby reducing the computational cost. 
This may come at the expense of precision in the obtained results. 
Thus, the selection of the truncation parameter is a crucial aspect of optimizing the trade-off between convergence speed and result fidelity in the Green's function method. 

Using Python, we compare different strategies for the finite difference discretisation of the Laplacian, utilising different preconditioners. 
 The preconditioning of the matrix is incorporated into the timing of the methods.

First we verify the convergence of the L-Scheme for the Gaussian test case. The L-Scheme converges for both the convex and the saddle-shaped initial guess as illustrated in \Cref{fig:saddle_res} (a). This is despite the fact that the saddle-shaped initial guess in  \Cref{fig:saddle_res} (b) violates the convexity assumption of iterates in \Cref{thm:convergence}. All variations of the L-Scheme converge after the same number of iterations, as shown in \Cref{fig:saddle_res} (a). 

\begin{figure}[h!]
  \centering
  \begin{subfigure}[c]{0.48\textwidth}
      \centering
      \includegraphics[width=\linewidth]{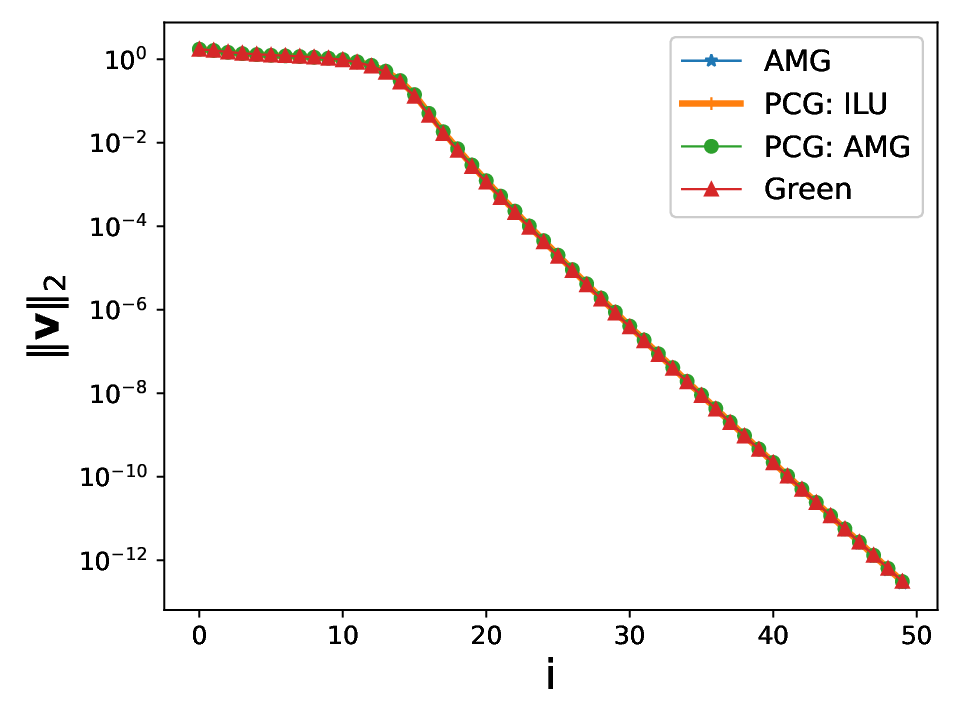}
      \caption{}
  \end{subfigure}
  \hfill
    \begin{subfigure}[c]{0.42\textwidth}
      \centering
      \includegraphics[width=\linewidth]{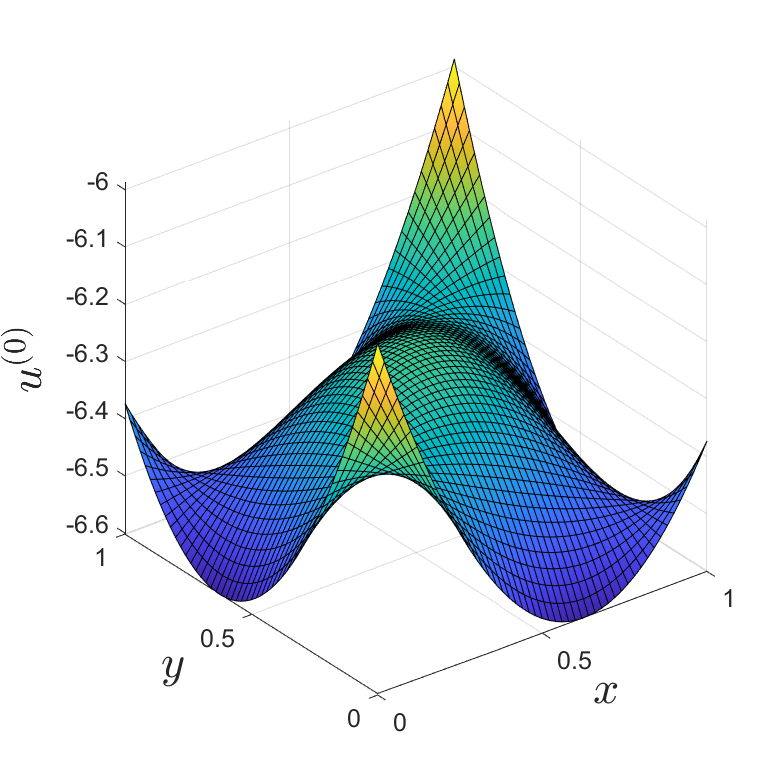}
      \caption{}
  \end{subfigure}
  \caption{[\Cref{subsec:compL}] Convergence of the L-Scheme implementations for the saddle-shaped initial guess (a), where $i$ is the iteration index and \textbf{v} is the update $u^{i+1} - u^{i}$, and the saddle-shaped initial guess $u_2^0$ (b).}\label{fig:saddle_res}
\end{figure}

Next, we compare the different implementations of the Gaussian test problem in \Cref{fig:l_scheme_gaussian}. Important to notice in \Cref{fig:l_scheme_gaussian} (a) is that the Green's function approach is slower in comparison to the other approaches, and we do not obtain any results if $N > 100$. This is because the matrix $\underline{\bm{G}}_{\rm P}\in \R^{N^2\times N^2}$  involved in Green's function method \eqref{eq:GP_matrix} is a full matrix. This restricts the applicability of this approach to coarse meshes as the storage and computational requirements for the resulting matrices become prohibitively large for finer meshes (large $N$). 

\begin{figure}[h!]
  \centering
  \begin{subfigure}[c]{0.48\textwidth}
      \centering
      \includegraphics[width=\linewidth]{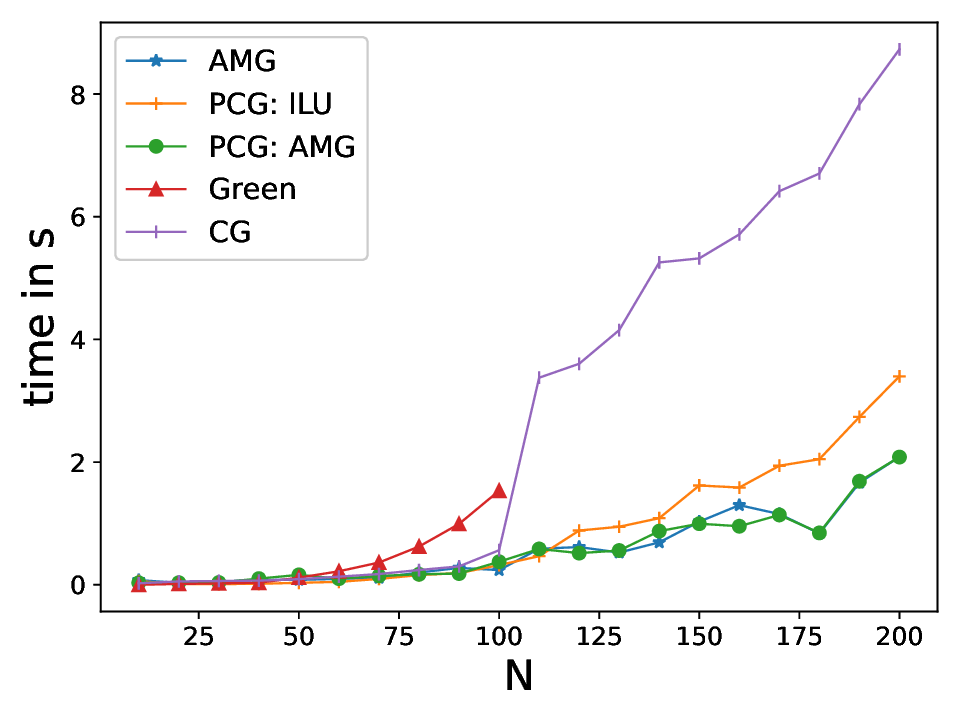}
      \caption{}
  \end{subfigure}
  \hfill
  \begin{subfigure}[c]{0.48\textwidth}
      \centering
      \includegraphics[width=\linewidth]{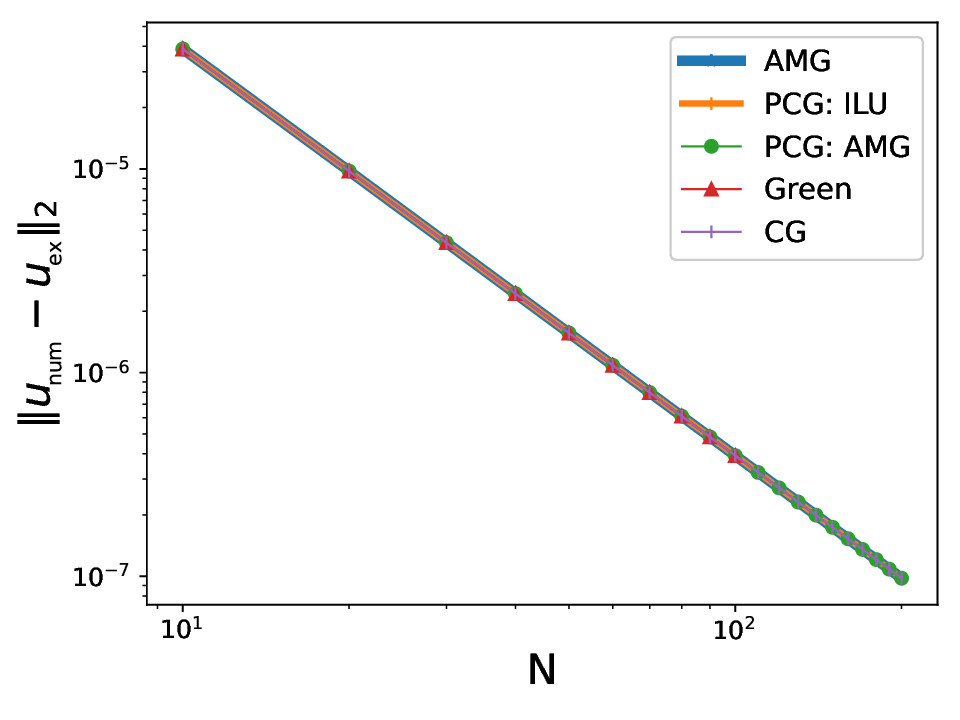}
      \caption{}
  \end{subfigure}
  \caption{[\Cref{subsec:compL}] CPU time (a) up to convergence, and final discretisation error (b) of the Gaussian test problem (L-Scheme) against mesh divisions $N$. }
  \label{fig:l_scheme_gaussian}
\end{figure}

Conversely, the finite difference matrices remain sparse, and are thus, much more scalable.  
The preconditioned gradient algorithm exhibits reduced computational time when employing a multigrid preconditioner compared to an incomplete LU preconditioner. The reason behind that is, that the conjugate gradient with AMG preconditioning needs less iterations than with incomplete LU.  Notably, the performance differences between the PCG method with AMG preconditioning and directly solving the system using AMG are marginal. In \Cref{tab:inner_ieters} we can observe that the number of inner iterations for the conjugate gradient solver are different if we use preconditioners. The number of fixed-point iterations is constant, but the number of inner iterations for the conjugate gradient algorithm is significantly larger if we do not apply preconditioners. This explains why the conjugate gradient method is significantly slower which can be seen in \Cref{fig:l_scheme_gaussian} (a).
As shown in \Cref{fig:l_scheme_gaussian} (b), all algorithms converge to the same solution across  implementations for a fixed discretisation, illustrated by the fact that they have the same discretisation error upon convergence. 

\begin{table}[h!]
\centering
\begin{tabular}{c|cc}
N   & \begin{tabular}[c]{@{}l@{}}with \\ preconditioning\end{tabular} & \begin{tabular}[c]{@{}l@{}}without \\ preconditioning\end{tabular} \\ \hline
50  & 9                                                               & 45                                                                 \\
100 & 15                                                              & 109                                                                \\
200 & 27                                                              & 249                                                               
\end{tabular}
\caption{[\Cref{subsec:compL}] Average number of conjugate gradient iterations required for convergence at different mesh sizes, with and without preconditioning for the Gaussian test case}.
\label{tab:inner_ieters}
\end{table}

Examining the results presented in \Cref{fig:combined_10_results} for the rapidly oscillating test case, we observe that although the problem becomes more difficult to solve, the computation times do not increase compared to the Gaussian test case. 
In contrast, the $2$-norm of the error $\Vert u_{\!_{\mathrm{num}}} - u_{\!_{\mathrm{ex}}}\Vert_{2}$, which represents the discretisation error of the converged solution, is larger than that observed for the Gaussian example.
This is expected, as the right-hand side $f$ exhibits oscillatory behaviour which induces larger discretisation errors due to the multiscale nature.
In \Cref{fig:combined_10_results}, it has been demonstrated that the behaviour of the various solvers is analogous to that of the Gaussian example. It is evident that the Green's function approach is the most time-consuming.
As the value of $l$ decreases, the computational time required is reduced. 
If $l$ increases, the problem is more difficult to solve, therefore the number of iterations and the computation time increases, which can be seen in \Cref{fig:combined_10_results}. 

\begin{figure}[h!]
  \centering
  \begin{subfigure}[c]{0.48\textwidth}
      \centering
      \includegraphics[width=\linewidth]{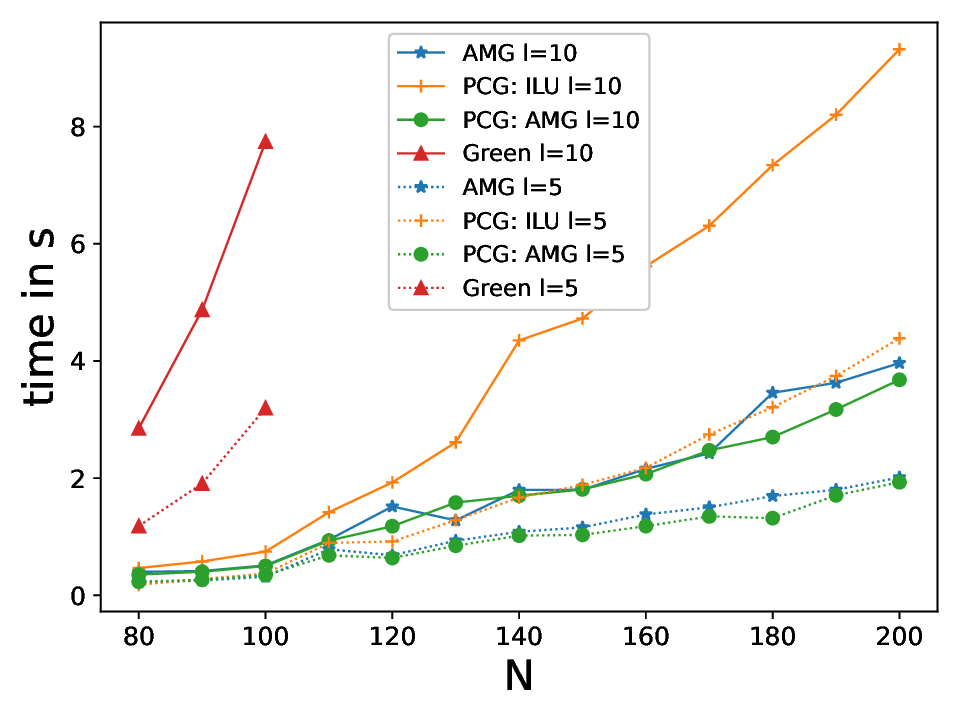}
      \caption{}
  \end{subfigure}
  \hfill
  \begin{subfigure}[c]{0.48\textwidth}
      \centering
      \includegraphics[width=\linewidth]{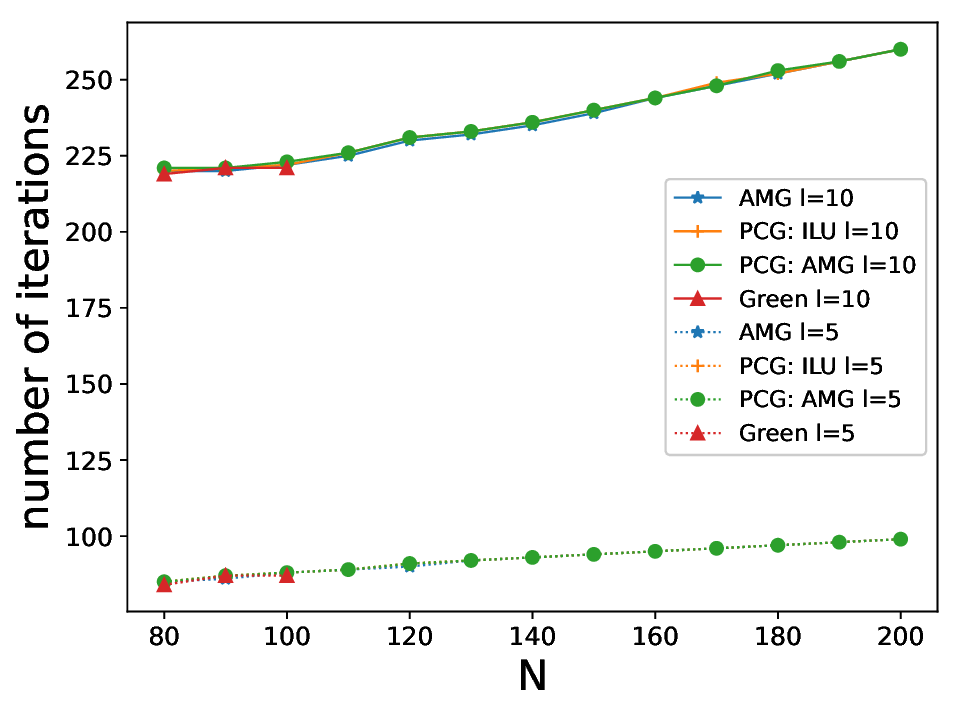}
      \caption{}
  \end{subfigure}
  \caption{[\Cref{subsec:compL}] Results of the rapidly-oscillating test problem: CPU time (a) and iterations (b) required for convergence for two different $l$-values against mesh divisions $N$.}
  \label{fig:combined_10_results}
\end{figure}


\subsection{Comparison with Newton iteration}\label{subsec:New}
For the comparison with Newton iteration we use the Gaussian test problem with $\mu_f =0$, i.e., the exact solution is given by \eqref{eq:ex_sol_1} and the right hand side is given by \eqref{eq:for_newton}. In other words, the right hand side is independent of $\nabla u$, which is necessary because Newton does not converge otherwise even for small initial guesses.  
In this section, we focus on comparing three different approaches: The L-Scheme with Green's function and the version with the preconditioned gradient solver will be compared against the Newton algorithm. For comparison with Newton, we are only looking at the Gaussian test case \eqref{eq:ex_sol_1} with convex initial guess \eqref{eq:initial_condition1}. First, we examine convergence behaviour of the various schemes for a mesh size $N=30$. For the Newton method, the constant $C_1$ in equation \eqref{eq:initial_condition1} must be set to $0.1$, which is close to the exact solution. Larger $C_1$ values lead to divergence. We have seen in \Cref{{subsec:compL}} that the initial guess for the L-Scheme can be much further away from the exact solution, e.g. $C_1=30$, the scheme still converges. It even converges for non-convex initial guesses ($C_2=10$ in \eqref{eq:initial_condition2}) which Newton does not. For the case when it does converge ($C_1=0.1$),  as shown in \Cref{fig:iter_vs_err} (a), Newton's method requires fewer iterations to meet the convergence criteria. This observation aligns with theoretical expectations. On the other hand, both the L-Schemes exhibit roughly the same convergence behaviour.

\begin{figure}[h!]
  \centering
  \begin{subfigure}[b]{0.48\textwidth}
      \centering
    \includegraphics[width=\linewidth]{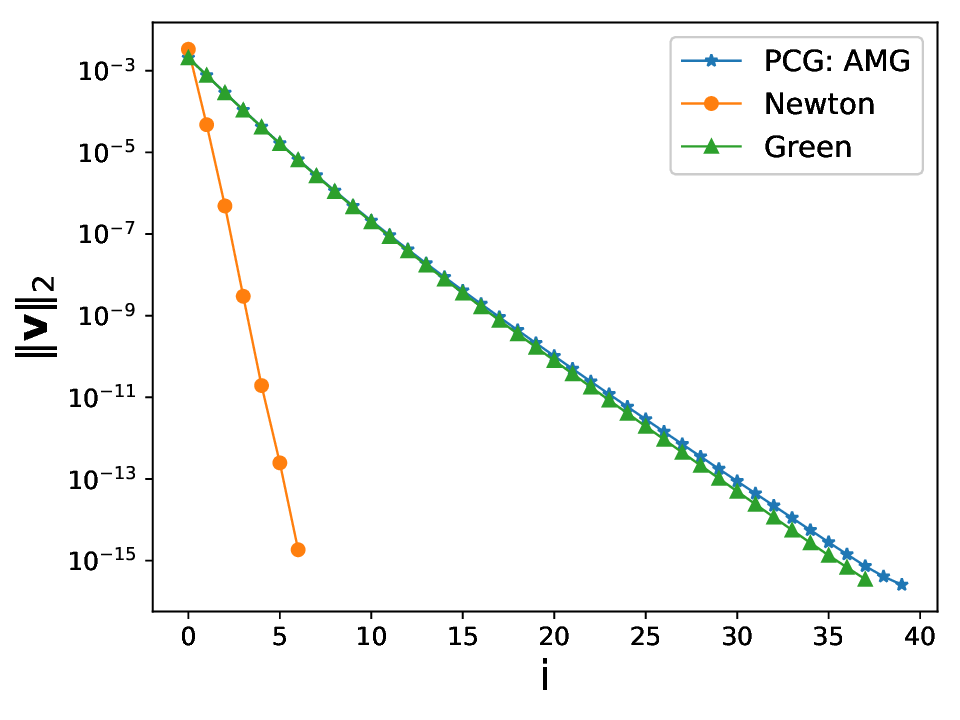}
  \end{subfigure}
  \hfill
  \caption{[\Cref{subsec:New}] Convergence of the three linearisation schemes (L-Scheme Green's function and finite difference versions, and Newton method) for $N=30$ and the Gaussian test problem, where $i$ is the iteration index and \textbf{v} is the update $u^{i+1} - u^{i}$.}
  \label{fig:iter_vs_err}
\end{figure}
In the following stage of our analysis, we will examine the behaviour of the various schemes in relation to different mesh sizes. First, we inspect how Newton iteration behaves with different preconditioners to keep the comparison with the L-Scheme fair. 
A comparison between the computation times in \Cref{fig:newton_gaussian} (a) and the number of iterations required in \Cref{fig:newton_gaussian} (b) reveals that the corresponding curves exhibit a similar behaviour. Furthermore, all considered approaches demonstrate comparable performance except for $N=90$, where the version without preconditioning is slower. 
Since the preconditioned conjugate gradient solver with algebraic multigrid preconditioning seems to perform the best for the regular L-Scheme, we are using this linear solver for Newton iteration as well. 
In the context of Newton's method, the calculation of the Jacobian matrix in each step is a computationally expensive process. Given that the preconditioning of the matrix must be performed in every iteration, the efficiency of this approach is comparable to that of a direct solver, offering minimal gains in efficiency.   For fine discretisations, for example $N \geq 90$, Newton diverges, which is another drawback of the algorithm.

\begin{figure}[htb!]
  \centering
  \begin{subfigure}[b]{0.48\textwidth}
      \centering
      \includegraphics[width=\linewidth]{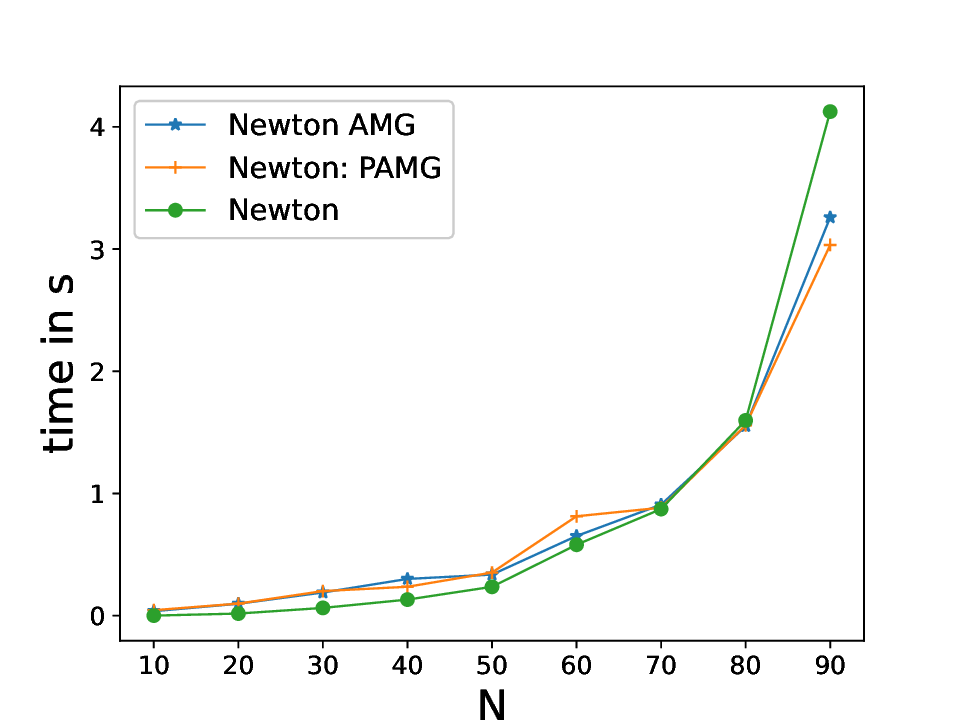}
      \caption{}
  \end{subfigure}
  \hfill
  \begin{subfigure}[b]{0.48\textwidth}
      \centering
      \includegraphics[width=\linewidth]{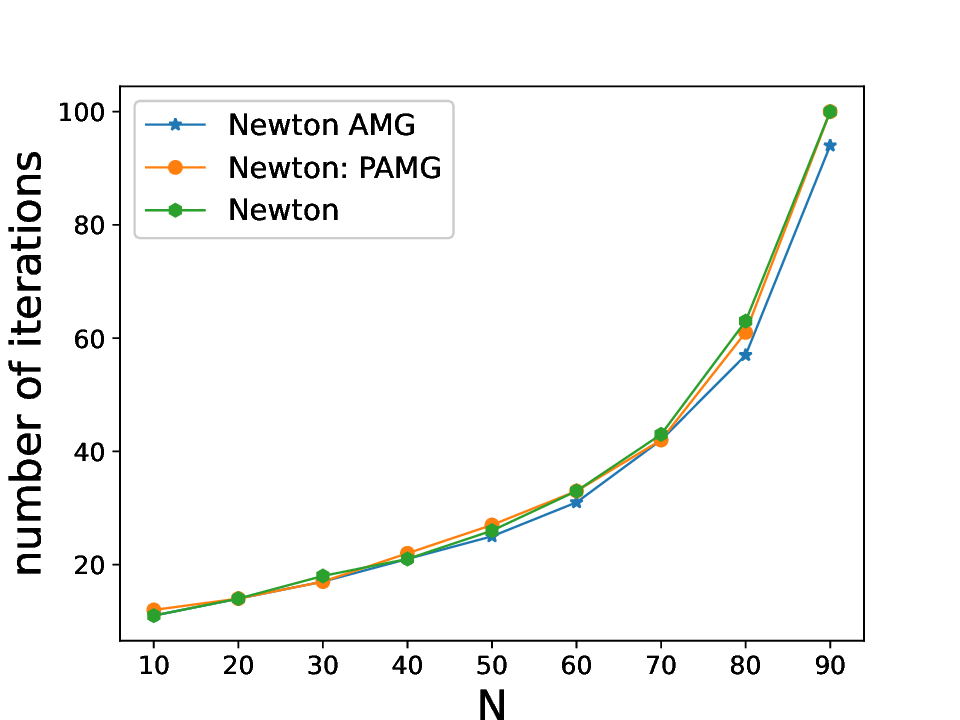}
      \caption{}
  \end{subfigure}
  \caption{[\Cref{subsec:New}] Mesh-study of the Gaussian test problem of the Newton scheme with different preconditioners: computation time (a) and number of iterations (b) }
  \label{fig:newton_gaussian}
\end{figure} 

Finally, we compare the L-Scheme and Newton iteration in \Cref{fig:results_comp}.
From \Cref{fig:results_comp} (a), it is evident that all the algorithms demonstrate similar performances for very coarse meshes. 
However, as the mesh size becomes finer, the performance of the Newtons method noticeably deteriorates, exhibiting slower convergence compared to the other algorithms. 
The Green's function approach also experiences a decline in performance for finer meshes, although to a lesser extent.
In contrast, the L-Scheme with preconditioned conjugate gradient and a finite difference discretisation for the Laplacian stands out as the most robust and efficient algorithm across all mesh sizes. Not only does it outperform the other two methods in terms of speed, but it also maintains its efficiency even for very fine meshes. 
This makes the L-Scheme particularly advantageous for applications requiring high-resolution computations.
\begin{figure}[htb!]  
  \centering
    \begin{subfigure}[b]{0.48\textwidth}
        \centering
        \includegraphics[width=\linewidth]{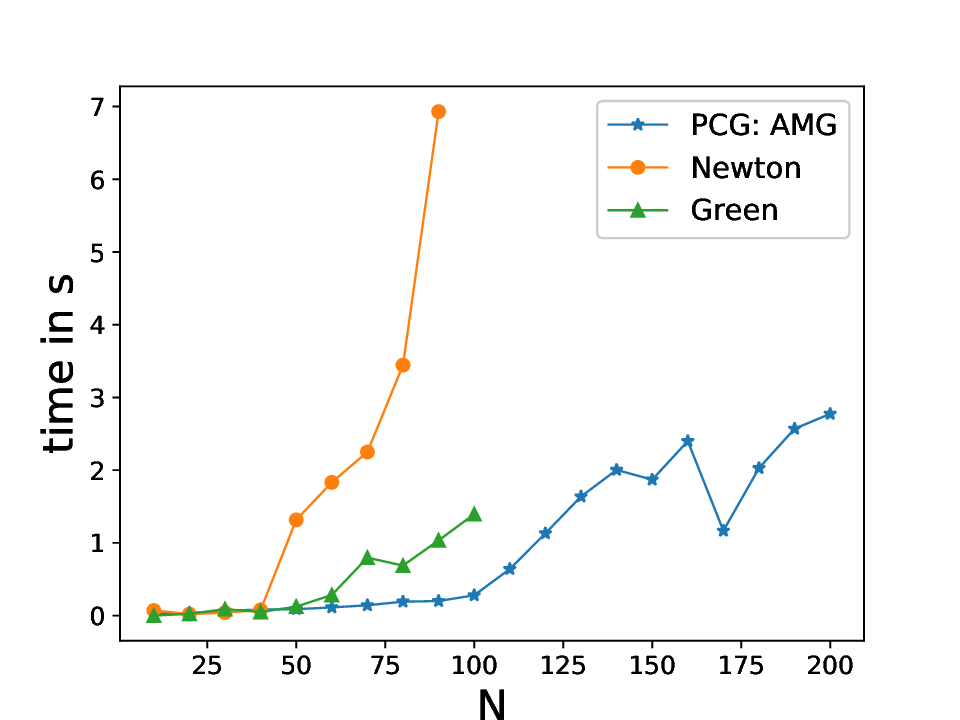}
        \caption{}
    \end{subfigure}
    \hfill
    \begin{subfigure}[b]{0.48\textwidth}
        \centering
        \includegraphics[width=0.92\linewidth]{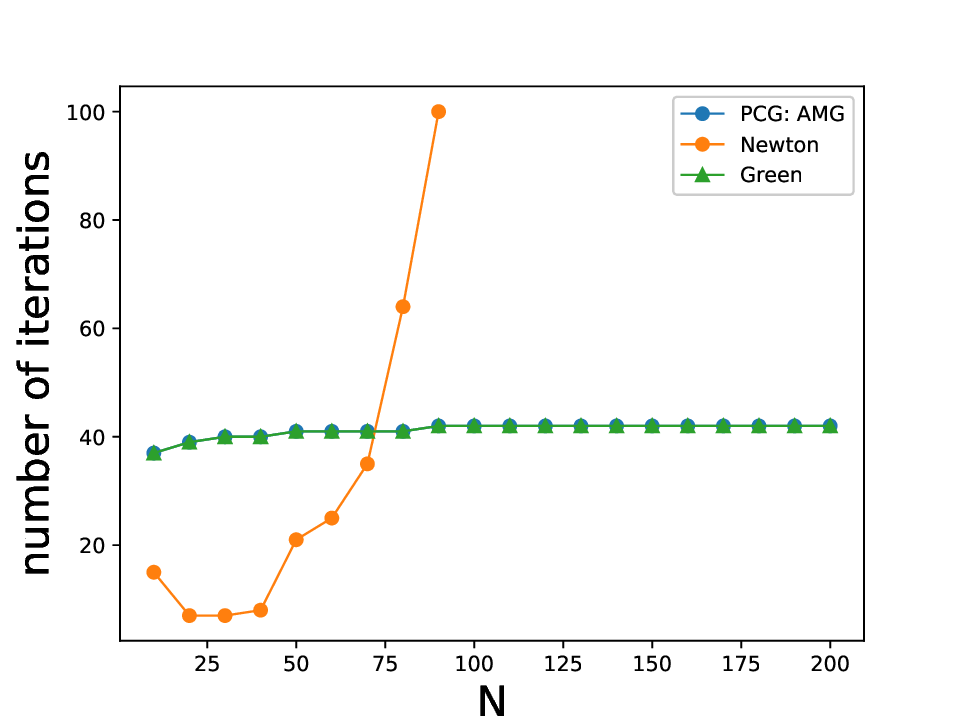}
        \caption{}
    \end{subfigure}
    \caption{[\Cref{subsec:New}] Results for the Gaussian test problem, computation time (a) and number of iterations (b) for L-Scheme Green's function and finite difference versions, and Newton method.}
    \label{fig:results_comp}
  \end{figure}

In \Cref{fig:results_comp} (b) we observe distinct patterns in the convergence behaviour of the iterative schemes with respect to mesh sizes too. Newton method generally requires fewer iterations to converge to the solution for mesh sizes with $N<75$, but the number of iterations increases drastically as the mesh is refined. 
On the other hand, the L-Scheme demonstrates a nearly constant iteration count for different values of $N$. 
This consistency supports that the L-Scheme's performance is hardly to variations in mesh size, making it a reliable choice for a wide range of mesh resolutions.

\section{Conclusions} \label{sec: concl and disc}

In this work, we have introduced and analysed a robust and fast iterative scheme for solving the elliptic Monge-Ampère equation with Dirichlet boundary conditions. While various numerical strategies have been proposed to address the challenges posed by the Monge-Ampère equation, particularly through different iterative and discretisation techniques, our approach distinguishes itself by employing a variant, that linearises the equation and uses a fixed-point iteration, which leads to the solution of a Poisson problem at each step. The right-hand side is constructed using a weighted residual. Well-posedness and consistency of the iteration are shown \Cref{thm:exists}, and convergence is proven in both $H^2$ and $L^\infty$ for the classical and generalised solutions respectively in \Cref{thm:convergence,thm:Linfty}, provided the weighted constant is larger than a power of the largest eigenvalue of the Hessian matrix and a convexity constraint is satisfied.

 The algorithm's robustness with respect to nonlinearity, degeneracy, and oscillations is a key strength of the approach. Since the iterative method effectively solves a Poisson problem in each step, the convergence is expedited by using any method suitable for accelerating its solving.  We pursue two such strategies here based on finite difference discretisation: fixed preconditioners and Green's function.

Extensive numerical experiments, \Cref{sec:numRes}, confirm the theoretical results and further highlight the practical advantages of the L-scheme. When combined with an appropriate preconditioner e.g. algebraic multigrid, the method demonstrates exceptional speed and robustness, even on fine grids. Compared to Newton’s method (\Cref{subsec:New}), which suffers from sensitivity to initial guesses and high computational cost due to repeated Jacobian evaluations, the L-scheme offers a stable and efficient alternative. Notably, the number of iterations remains essentially constant regardless of mesh refinement, emphasising the method's grid-independence (\Cref{subsec:compL}).


\renewcommand{\thesection}{\Alph{section}}
\renewcommand{\thesubsection}{\thesection.\arabic{subsection}}
\renewcommand{\thesubsubsection}{\thesubsection.\arabic{subsubsection}}

\setcounter{section}{0}
\section{Appendix} 

\subsection{Elliptic regularity}
\begin{lemma}[Elliptic Regularity]\label{lem:lemma_for_proof}
    Let $u \in H_0^1(\Om) \cap H^2(\Om)$. Then, there exists a constant $C_E \geq 1,$ independent of $u$, such that
    \begin{equation}\label{eq:lem_to_proof}
        \Vert u \Vert_{H_2} \leq C_E \Vert \D u \Vert_{L^2(\Om)}.
    \end{equation}
\end{lemma}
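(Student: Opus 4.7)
The plan is to bound each of the three pieces of $\|u\|_{H^2}^2 = \|u\|_{L^2}^2 + \|\nabla u\|_{L^2}^2 + \|\mathrm{D}^2 u\|_{L^2}^2$ by $\|\Delta u\|_{L^2}^2$ up to a multiplicative constant, and then combine them. The domain $\Om$ is bounded with Lipschitz boundary (and in the places where this lemma is invoked it can be taken convex or $C^2$, as assumed throughout the paper), so all tools from standard elliptic theory for the Poisson equation on $H_0^1\cap H^2$ are at our disposal.

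First, I would handle the two lower-order terms by Poincaré and a standard energy identity. Since $u\in H_0^1(\Om)$, Poincaré's inequality gives $\|u\|_{L^2(\Om)}\leq C_P \|\nabla u\|_{L^2(\Om)}$ for some $C_P=C_P(\Om)>0$. Next, using density of $C^\infty_c(\Om)$ in $H_0^1(\Om)$ to justify integration by parts, and the fact that $u\in H^2(\Om)$ has a well-defined weak Laplacian in $L^2(\Om)$, we obtain
\begin{equation*}
\|\nabla u\|_{L^2(\Om)}^2 = -\int_\Om u\,\Delta u\,\mathrm{d}\bm{x} \leq \|u\|_{L^2(\Om)}\,\|\Delta u\|_{L^2(\Om)} \leq C_P\,\|\nabla u\|_{L^2(\Om)}\,\|\Delta u\|_{L^2(\Om)}.
\end{equation*}
Dividing by $\|\nabla u\|_{L^2(\Om)}$ (trivial if zero) yields $\|\nabla u\|_{L^2(\Om)}\leq C_P\|\Delta u\|_{L^2(\Om)}$, and iterating gives $\|u\|_{L^2(\Om)}\leq C_P^2\|\Delta u\|_{L^2(\Om)}$.

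The second and harder step is the bound on the full Hessian by the Laplacian, i.e.\ $\|\mathrm{D}^2 u\|_{L^2(\Om)}\leq C_R\|\Delta u\|_{L^2(\Om)}$. For $u\in C_c^\infty(\Om)$ this follows from the pointwise identity $\sum_{j,k}u_{jk}^2 = (\Delta u)^2 - 2\sum_{j<k}(u_{jj}u_{kk}-u_{jk}^2)$, integration by parts of the mixed terms producing zero boundary contributions, and then passing to the $H_0^1\cap H^2$ closure by density. For general $u\in H_0^1(\Om)\cap H^2(\Om)$ on a Lipschitz domain, vanishing of boundary contributions is not automatic, but the estimate is nevertheless the classical $H^2$-regularity result for the Dirichlet Laplacian; I would invoke it directly from, e.g., \cite[Theorem~9.25]{brezis2011functional} or \cite[Theorem~3.1.2.1 and Theorem~3.2.1.2]{grisvard2011elliptic}, which apply under the domain regularity assumed in the paper (convex Lipschitz suffices, as does $C^{1,1}$, and a fortiori $C^2$ as in \Cref{thm:convergence}).

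Combining the three estimates and taking $C_E:=\sqrt{C_P^4+C_P^2+C_R^2}\vee 1$ yields \eqref{eq:lem_to_proof}. The constant is independent of $u$ because $C_P$ and $C_R$ depend only on $\Om$. The main obstacle, as indicated, is the second step: it is the only place where domain regularity genuinely enters, and without citing the elliptic regularity theorem one would need to reproduce its proof (either via the integration-by-parts identity with careful treatment of the boundary on a convex Lipschitz domain, or via Calderón–Zygmund theory). Since the cited results cover the hypothesis, the cleanest route is to invoke them and keep the lemma's proof short.
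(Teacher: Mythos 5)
Your proof is correct and follows essentially the same route as the paper's: both establish $\|\nabla u\|_{L^2}\le C_P\|\Delta u\|_{L^2}$ via the energy identity and Poincar\'e, and both close the argument by invoking the standard $H^2$ elliptic regularity theorem for the Dirichlet Laplacian (the paper cites Evans in the form $\|u\|_{H^2}\le C_1(\|u\|_{L^2}+\|\Delta u\|_{L^2})$ and then eliminates the $\|u\|_{L^2}$ term, whereas you split the $H^2$ norm into its three components and bound the Hessian piece directly). The only genuinely different ingredient you offer --- the pointwise identity $\sum_{j,k}u_{jk}^2=(\Delta u)^2-2\sum_{j<k}(u_{jj}u_{kk}-u_{jk}^2)$ with integration by parts on $C^\infty_c$ --- you correctly flag as requiring a density or domain-regularity argument, and you fall back to citing the regularity theorem, so in the end both proofs rest on the same external result.
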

\noindent \textit{Proof:} Define $g \in L^2(\Om) $ through the Poisson equation
\begin{equation*}
    -\D u =: g.
\end{equation*}
Then, $u\in H^1_0(\Om)$ is also a weak solution of this Poisson problem. By definition of a weak solution, $u$ satisfies for all $v \in H_0^1(\Om)$
\begin{equation*}
    \langle \nabla  u, \nabla v \rangle_{L^2(\Om)} = \langle g, v \rangle_{L^2(\Om)}.
\end{equation*}
Considering the special case $u = v$, we find
\begin{equation*}
    \Vert \nabla u \Vert_{L^2(\Om)}^2 := \langle \nabla u, \nabla u \rangle_{L^2(\Om)} = \langle g, u\rangle_{L^2(\Om)} \geq 0.
\end{equation*}
With the Cauchy-Schwarz inequality, this becomes
\begin{equation}
      \Vert \nabla u \Vert_{L^2(\Om)}^2 \leq \Vert g \Vert_{L^2(\Om)} \Vert u\Vert_{L^2(\Om)} := \Vert \D u \Vert_{L^2(\Om)} \Vert u\Vert_{L^2(\Om)}.
\end{equation}
Applying the Poincaré inequality, there exists a $C_{\mathrm{P},\Om} > 0$ such that the equation above satisfies
\begin{equation}
    \Vert \nabla u \Vert_{L^2(\Om)}^2 \leq  C_{\mathrm{P},\Om} 
 \Vert \D u \Vert_{L^2(\Om)} \Vert \nabla u\Vert_{L^2(\Om)}, \;\;\text{ or }\;\; \Vert \nabla u \Vert_{L^2(\Om)} \leq C_{\mathrm{P},\Om} \Vert \D u \Vert_{L^2(\Om)} \label{eq:c1}.
\end{equation}
We use the elliptic regularity theorem, c.f. Evans \cite[Chapter 6]{evans2022partial}, which implies that there exists a constant $C_1 > 0$ such that
\begin{equation*}
    \Vert u \Vert_{H^2(\Om)} \leq C_1 (\Vert u \Vert_{L^2(\Om)} + \Vert g \Vert_{L^2(\Om)}) := C_1 (\Vert u \Vert_{L^2(\Om)} + \Vert \D u \Vert_{L^2(\Om)}).
\end{equation*}
By applying Poincaré's inequality to this equation, we conclude that there exists a $C_2 >0$ such that
\begin{equation}\label{eq:c3}
     \Vert u \Vert_{H^2(\Om)} \leq C_2 (\Vert  \nabla u \Vert_{L^2(\Om)} + \Vert \D u \Vert_{L^2(\Om)}).
\end{equation}
Combining \eqref{eq:c1} and \eqref{eq:c3} gives us the desired regularity result \eqref{eq:lem_to_proof}. 
\hfill \(\square\)

\bibliographystyle{ieeetr}

\end{document}